\newtheorem{lemma}{Lemma}
\newtheorem{coro}{Corollary}
\newenvironment{proof}{\noindent {\em Proof.}}{\hfill \hspace*{1pt} \hfill $\square$}
\newtheorem{nnremark}{\bf Remark}
\newenvironment{remark}{\begin{nnremark} \rm }{\hfill \hspace*{1pt}\hfill $\lrcorner$\end{nnremark}}
\begin{document}
	
	\begin{frontmatter}
		
		\title{State feedback control law design \\for an age-dependent SIR model} 
		
		\thanks[footnoteinfo]{This paper was not presented at any IFAC 
			meeting. Corresponding author C.~Sonveaux. Tel. +32-81-724945. }
		
		\author[UN]{Candy Sonveaux}\ead{candy.sonveaux@unamur.be},    
		\author[UN]{Joseph J. Winkin}\ead{joseph.winkin@unamur.be},               
	
		\address[UN]{ University of Namur, Department of Mathematics and naXys, Rue de Bruxelles 61, B-5000 Namur}  

		\begin{keyword}                           
			epidemiology, nonlinear system, distributed parameters systems, partial integro-differential equations, dynamical analysis, semigroup, feedbak law, stability       
		\end{keyword}                             

		\begin{abstract}                          
			An age-dependent SIR model is considered with the aim to develop a state-feedback vaccination law in order to eradicate a disease. A dynamical analysis of the system is performed using the principle of linearized stability and shows that, if the basic reproduction number is larger than $1$, the disease free equilibrium is unstable. This result justifies the developement of a vaccination law. Two approaches are used. The first one is based on a dicretization of the partial integro-differential equations (PIDE) model according to the age. In this case a linearizing feedback law is found using Isidori’s theory. Conditions guaranteeing stability and positivity are established.  The second approach yields a linearizing feedback law developed for the PIDE model. This law is deduced from the one obtained for the ODE case.  Using semigroup theory, stability conditions are also obtained.  Finally, numerical simulations are presented to reinforce the theoretical arguments. 
		\end{abstract}
		
	\end{frontmatter}
	
	\section{Introduction}
	Now, even more than before, we know that infectious diseases may lead to huge damage once out of control. The successful eradication of those diseases implies in particular the ability to understand their transmission dynamics. For this purpose, an adapted version of the well-known SIR model of Kermack and McKendrick \cite{kermack_1991} is considered here. Indeed, several adaptations of this model were performed along the time, in particular with models taking into account the age of the individuals (see e.g. \cite{dietz_1985}, \cite{anderson_1983} and \cite{Inaba}). The population is assumed to be divided into three distinct
	classes : the group S of uninfected individuals susceptible
	to catch the disease; the group I of infected individuals
	who can transmit the disease and the group R of recovered
	individuals who are permanently immune to the disease. In the following, we will use the terminology S-, I- and R-individuals, to refer to susceptible infected and recovered individuals, respectively.
	This is the main assumption in a SIR model, which is a
	simple but validated and widespread model.
	Here the importance of the individuals age in the model is taken into account. It is motivated by the fact that several factors in diseases propagation
	depend on the age of the individuals, vaccination being one
	of them. \\In this framework, the dynamics of the disease propagation is described by a set of partial integro-differential equations, as mentioned for instance in \cite{Bastin} and references therein. 
	The dynamical analysis of such systems and more complex ones is well developed in the literature (see e.g. \cite{Inaba1990}, 
	\cite{Inaba2006}, \cite{Inaba} and \cite{yang_2018}). In those articles, the conclusion about the stability of equilibria is performed by using the principle of linearized stability. However, as far as we can judge, no proof that this principle can be applied is provided. In this paper, a proof of the principle of linearized stability is developed using recent theoretical arguments. Moreover, although the question of control of age-dependent diseases was studied by several authors (see e.g. \cite{Cai}, \cite{demasse_2015} and \cite{tahir_2021}), it is, up to our knowledge, often performed using optimal control methods. Therefore, in those papers, an additional class of individuals is considered: the class of vaccinated individuals. Some authors, as in \cite{liu_2011}, use a pulse vaccination strategy instead of a continuous vaccination law. The particularity of this work is the use of a nonlinear stabilizing linearizing state-feedback control law on a model described by partial integro-differential equations (PIDE). The design of this feedback law is based on the one for an approximate model with ordinary differential equations (ODE). In both cases the global linearizing stability analysis of the feedbacks is performed. Observe that for the PIDE model, this analysis is performed on an infinite dimensional state-space model involving bounded operators.\\
	The paper is organized as follows. In Section \ref{Model}, four versions of an age-structured SIR epidemic model are presented. The three first ones are equivalent modulo a change of variables and consist of a system of nonlinear partial integro-differential equations. The fourth one is an approximation obtained via a discretization of the first system: The disease dynamics of this model consists of a system of nonlinear ordinary differential equations (ODE). The dynamical analysis of the system, detailed in Section \ref{dynamical}, is developed for the PIDE model. Results on well-posedness and stability are obtained and illustrated numerically. Based on those results, the design of a positively stabilizing state-feedback law is performed for the ODE model in Section \ref{positive}. Results on the global stability of the control law and positivity of the solutions are obtained. The aim of this section is to guide the design of a positively stabilizing state-feedback law for the PIDE model, which is described in Section \ref{positive_PDE}. The stabilizing property of this feedback is proven.  In both sections, numerical simulations corroborate the analytical results.  
	\section{Model formulation}\label{Model}
	In this section, an age-dependent SIR epidemic model, described in \cite{Bastin}, is considered. A change of variables, inspired by \cite{Inaba}, is used in order to restrain the model to two partial differential equations instead of three. We also perform a discretization by age of the first model which leads to a set of ordinary differential equations. Moreover, in the framework of epidemic models, several control policies can be studied such as vaccination, quarantine and isolation, treatment, sterilization, slaughter... In this work, we consider vaccination as input to the model.
	\subsection{SIR Model}
	As in the classical SIR model, the population is divided into three distinct classes: the groups $S$ of S-individuals, $I$ of I-individuals and $R$ of R-individuals. \\
	The evolution of these groups, in terms of densities, is described by a system of nonlinear partial integro-differential equations (PIDE model) 
	\begin{align} \label{PDE_SIR}
		\left({{\partial }_t} + \alpha{{\partial }_a}\right) S\left(t,a\right) &= -\left(\Theta\left(t,a\right)+\mu\left(a\right)\right)S\left(t,a\right)\nonumber\\& -\beta\left(a\right) S\left(t,a\right)\displaystyle\int_0^L I\left(t,b\right)db,\nonumber\\ 
		\left({{\partial }_t} + \alpha{{\partial }_a}\right) I\left(t,a\right) &= -\left(\mu\left(a\right)+\gamma\left(a\right)\right)I\left(t,a\right)\\&+\beta\left(a\right) S\left(t,a\right) \displaystyle\int_0^L I\left(t,b\right)db,\nonumber\\
		\left({{\partial }_t} + \alpha{{\partial }_a}\right) R\left(t,a\right) &=\Theta\left(t,a\right)S\left(t,a\right)+\gamma\left(a\right)I\left(t,a\right)\nonumber\\&-\mu\left(a\right)R\left(t,a\right)\nonumber
	\end{align}
	under non-negative initial conditions $S\left(0,a\right)=S_0\left(a\right)$, $I\left(0,a\right)=I_0\left(a\right)$, $R\left(0,a\right)=R_0\left(a\right)$ and boundary conditions $S\left(t,0\right)=B, I\left(t,0\right)=0, R\left(t,0\right)=0$, where $B$ denotes the birth rate.\\
	The interpretations and units of the variables and parameters involved in Model \eqref{PDE_SIR} and in the following ones are described in the table of Appendix \ref{Appendix1}.
	The coefficient $\alpha$ is introduced to balance the possible change of units between time and age. For instance, it is set to $1/365$ when time is in day and age in year, which is the case here.
	The variables $S\left(t,a\right), I\left(t,a\right)$ and $R\left(t,a\right)$ denote the age density of individuals of each group at time $t$. Therefore, the number of S-individuals between two given ages $a$ and $b$ between $0$ and $L$ (the maximum life duration) is given by $\displaystyle\int_{a}^{b} S\left(t,s\right)ds.$
	The population is assumed to be closed, meaning that there is no immigration or emigration. Therefore, a modification in the total size of the population is only caused by birth and mortality. Their respective rates are given by $B$, which is assumed to be constant, and $\mu\left(a\right)$. Moreover, the mode of transmission of the disease is assumed to be by contact between S-individuals and I-individuals and this disease transmission rate is given by $\beta\left(a\right) \displaystyle\int_0^L I\left(t,b\right)db$ where $\beta\left(a\right)$ is the transmission coefficient between S-individuals of age $a$ and all I-individuals. In addition, all the I-individuals become recovered when they are cured. The recovering rate is denoted by $\gamma\left(a\right)$. It is assumed that $\beta\left(\cdot\right)$ and $\gamma\left(\cdot\right)$ are in $L^{\infty}\left(\left[0,L\right]\right)$. Finally, the term $\Theta\left(t,a\right)$ is the input variable representing the rate of S-individuals being vaccinated at time $t$ and age $a$. Those individuals leave the class of S-individuals and become recovered. The vaccination is assumed to work perfectly, meaning that once an individual is vaccinated, he/she gets recovered and never catches the disease afterwards.
	\subsection{Normalized SIR model}
	The age density of the total population is given by $P\left(t,a\right) =S\left(t,a\right)+I\left(t,a\right)+R\left(t,a\right)$. Therefore, the dynamics of the total population is given by
	$$
	\partial_t P\left(t,a\right)+ \alpha\partial_a P\left(t,a\right)=-\mu\left(a\right)P\left(t,a\right),$$
	with initial condition $P\left(0,a\right)=P_0\left(a\right)$ and boundary condition $P\left(t,0\right)=B$.
	The solution of this system can be determined by using the method of characteristics, as mentioned in \cite{Hethcote_2008},
	\begin{equation*}
		P\left(t,a\right)=\left\{\begin{array}{l}
			B\exp\left(-\dfrac{1}{\alpha}\displaystyle\int_0^a\mu\left(\eta\right)d\eta\right) \text{for }t\geq \dfrac{a}{\alpha},\\[0.4cm]
			P_0\left(a-\alpha t\right)\exp\left(-\displaystyle\int_{a-\alpha t}^a \dfrac{1}{\alpha}\mu\left(\eta\right)d\eta\right) \text{otherwise}.
		\end{array}\right.
	\end{equation*}
	\indent In order to get a dimensionless model of System \eqref{PDE_SIR}, new variables, $s$, $i$ and $r$ defined by 
	$S\left(t,a\right)=P\left(t,a\right)s\left(t,a\right)$; $I\left(t,a\right)=P\left(t,a\right)i\left(t,a\right)$; $R\left(t,a\right)=P\left(t,a\right)r\left(t,a\right)$
	are introduced. Therefore, Model \eqref{PDE_SIR} can be rewritten as a normalized nonlinear system of partial integro-differential equations, denoted by NPIDE,
	\begin{align}\label{PDE_SIR_normalised}	
		\left(\partial_t + \alpha\partial_a\right) s\left(t,a\right)&= -\Theta\left(t,a\right)s\left(t,a\right)\nonumber\\&-\beta\left(a\right)s\left(t,a\right)\displaystyle\int_0^L i(t,b)P\left(t,b\right) db, \nonumber\\
		\left(\partial_t + \alpha\partial_a\right) i\left(t,a\right) &= -\gamma\left(a\right)i\left(t,a\right)\\ &+\beta\left(a\right)s\left(t,a\right)\displaystyle\int_0^L i(t,b)P\left(t,b\right) db, \nonumber\\
		\left(\partial_t +\alpha\partial_a\right) r\left(t,a\right) &=\Theta\left(t,a\right)s\left(t,a\right)+  \gamma\left(a\right)i\left(t,a\right)\nonumber
	\end{align}
	under initial conditions $s\left(0,a\right)=s_0\left(a\right),$ $i\left(0,a\right)=i_0\left(a\right),$ $r\left(0,a\right)=r_0\left(a\right)$
	and boundary conditions $s\left(t,0\right)=1,$ $i\left(t,0\right)=0,$ $r\left(t,0\right)=0$.
	Using those variables, $s\left(t,a\right)+i\left(t,a\right)+r\left(t,a\right)=1$. Therefore, only two equations are needed in order to characterize the dynamics of the disease propagation.\\ In some cases, it is easier to work with a system with homogeneous boundary conditions. Here the new variable $\hat{s}\left(t,a\right)=s\left(t,a\right)-1$ yields an equivalent model with homogeneous boundary conditions, which will be denoted as HNPIDE, 
	\begin{align}\label{PDE_SIR_normalised_null_BC}	
		\left(\partial_t + \alpha\partial_a\right) \hat{s}\left(t,a\right)&= -\Theta\left(t,a\right)\left(1+\hat{s}\left(t,a\right)\right)\nonumber\\&+\beta\left(a\right)\left(1+\hat{s}\left(t,a\right)\right)\displaystyle\int_0^L i(t,b)P\left(t,b\right) db, \nonumber\\
		\left(\partial_t + \alpha\partial_a\right) i\left(t,a\right) &= -\gamma\left(a\right)i\left(t,a\right)+\\ &\beta\left(a\right)\left(1+\hat{s}\left(t,a\right)\right)\displaystyle\int_0^L i(t,b)P\left(t,b\right) db, \nonumber 
	\end{align}
	under initial conditions $\hat{s}\left(0,a\right)=\hat{s}_0\left(a\right)=s_0\left(a\right)-1$ $i\left(0,a\right)=i_0\left(a\right)$
	and boundary conditions $\hat{s}\left(t,0\right)=0,$ $i\left(t,0\right)=0.$ \\
	Observe that in the following, we consider the same units in age and time, therefore $\alpha$ equals $1$.
	\subsection{Age Discretized Normalized SIR Model}\label{Section_NODE}
	The previous infinite dimensional models can be approximated by using discretization by age. This yields a nonlinear finite dimensional model for which some known theories can be applied. Inspired by Tudor's article \cite{Tudor}, Model \eqref{PDE_SIR} is discretized in $n$ classes of age, $\left[0,a_1\right), \left[a_1,a_2\right),...,\left[a_{n-1},L\right)$. The proportion of S-individuals in the $k-$th class of age represents the fraction of individuals in class age $k$ that is susceptible at time $t$, which gives $$ s_k\left(t\right)= \dfrac{\displaystyle\int_{a_{k-1}}^{a_k}S\left(t,a\right)da}{N_k} $$ where, assuming that the population has reached a time-invariant age distribution ($P\left(t,a\right)=P\left(a\right)$), $$N_k=\displaystyle\int_0^L P\left(a\right)da$$ corresponds to the total number of individuals in the $k$th class of age in the population. Similar relations hold for the proportion of  I- and R-individuals in the $k-$th class of age at time $t$. Moreover, it is assumed that the continuous functions of age from Model \eqref{PDE_SIR} are constants for a fixed class of age.  In other words, it is assumed that $\mu\left(a\right)=\mu_k, \gamma\left(a\right)=\gamma_k, \beta\left(a\right)=\beta_k$ for $a\in\left[a_{k-1},a_k\right)$ for all $k=1,...,n$.  Note that these constants are taken, in numerical simulations, as the mean values of the considered functions on this interval. However, other choices could be made. Moreover, the input is also assumed to be independent of $a\in\left[a_{k-1},a_k\right)$ and is given by $\theta_k\left(t\right)=\Theta\left(t,a_k\right)$ for all $k=1,...,n$.  In addition, the number of S-individuals that are moving from the $k-$th class of age to the $(k+1)-$th at time $t$, $S\left(t,a_k\right)$, is assumed to be proportional to the size of the $k-$th class of age, i.e there exists a transfer rate $\rho_k$ such that $S\left(t,a_k\right)=\rho_kN_ks_k\left(t\right)$ for $=k...n$. Remark that, since $L$ is the maximum age, $\rho_n$ equals $0$. The transfer rate $\rho_k$ is also used for $I\left(t,a_k\right)$ and $R\left(t,a_k\right)$.\\ As mentioned in \cite{Tudor}, integrating the equations of Model \eqref{PDE_SIR} with respect to the age variable, from $a_{k-1}$ to $a_k$, for $k=1,...,n$ and using previous assumptions and initial conditions of Model \eqref{PDE_SIR}, lead to the following system of nonlinear ODEs, \vspace{-0.6cm}
	\small{
	\begin{align*}
		N_k\dfrac{ds_k\left(t\right)}{dt}&=\rho_{k-1}N_{k-1}s_{k-1}\left(t\right)-\left(\rho_k+\mu_k+\theta_k\left(t\right)\right)N_ks_k\left(t\right)\\&-\beta_kN_ks_k\left(t\right)\displaystyle\sum_{j=1}^n N_ji_j\left(t\right)+B\left(t\right)\delta_{1k}\\
		N_k\dfrac{di_k\left(t\right)}{dt}&=\rho_{k-1}N_{k-1}\left(t\right)i_{k-1}\left(t\right)-\left(\rho_k+\gamma_k+\mu_k\right)N_ki_k\left(t\right)\\&+\beta_kN_ks_k\left(t\right)\displaystyle\sum_{j=1}^n N_ji_j\left(t\right)\\
		N_k\dfrac{dr_k\left(t\right)}{dt}&=\rho_{k-1}N_{k-1}r_{k-1}\left(t\right)+\gamma_kN_ki_k\left(t\right)\\&+\theta_k\left(t\right)N_ks_k\left(t\right)-\left(\rho_k+\mu_k\right)N_kr_k\left(t\right)
	\end{align*}}for $k=1,...,n$ where $\rho_0$ is chosen to be equal to $0$ and $\delta_{ij}$ denotes the Kronecker symbol.\\
	Summing those equations gives the following relations for the $N_i$'s,\vspace{-0.1cm}
	\begin{align*}
		\dfrac{B\left(t\right)}{N_1}&=\rho_1+\mu_1,\\
		\rho_{k-1}\dfrac{N_{k-1}}{N_k}&=\rho_k+\mu_k, \text{ for }k=2,...,n.
	\end{align*}
	This leads to the identity $s_k+i_k+r_k=1$, $k=1,...,n$. Therefore, only $2n$ equations are needed.
	Moreover, using this last assumption and the previous relations, and dividing the set of ODE's equations by $N_k$ for $k=1,...,n$ gives a set of $2n$ ordinary differential equations: 
	\begin{align}	
		\label{ODE}	
		\dfrac{ds_k\left(t\right)}{dt}&=T_ks_{k-1}\left(t\right)\nonumber\\&-\left(T_k +\theta_k\left(t\right)+\beta_k\displaystyle\sum_{j=1}^n N_ji_j\left(t\right)\right)s_k\left(t\right),\\
		\dfrac{di_k\left(t\right)}{dt}&=T_ki_{k-1}\left(t\right)-\left(T_k+\gamma_k\right)i_k\left(t\right)\nonumber\\&+\beta_ks_k\left(t\right)\displaystyle\sum_{j=1}^n N_ji_j\left(t\right)\nonumber
	\end{align}
	for $k=1,...,n,$ where $T_k=\rho_k+\mu_k$ and by setting $s_0\left(t\right)=1$ and $i_0\left(t\right)=0$ .
	In the following, this model will be called the NODE Model since it is an age discretized normalized model (involving proportions as variables). 
	\section{Dynamical Analysis of HNPIDE Model}\label{dynamical}
	\subsection{Well-posedness and stability of equilibria}
	Using similar arguments as in \cite{Inaba1990} and in \cite[Chap.6]{Inaba}, Model \eqref{PDE_SIR_normalised_null_BC} is well-posed, assuming that the rate of vaccinated S-individuals is given by a Lipschitz continuous state feedback law $\Theta\left(t,a\right)=F\left(\hat{s}\left(t,a\right),i\left(t,a\right)\right)$, since the existence and uniqueness of a non-negative solution $\left(\hat{s},i\right)$ which is smaller than $1$ can be proven using semigroup theory and the method of characteristics. \\
	Moreover, regarding the stability analysis, notice that $\displaystyle\lim_{t \to \infty} P\left(t,a\right)=B\exp\left(-\displaystyle\int_0^L\mu\left(\eta\right)d\eta\right)=:c\left(a\right).$ Therefore, Model \eqref{PDE_SIR_normalised} is asymptotically autonomous, then the stability analysis can be performed on the limiting autonomous normalized system, which is Model \eqref{PDE_SIR_normalised} where $P\left(t,b\right)$ is replaced with $c\left(b\right)$. Secondly, notice that, at the equilibrium, the input $\Theta\left(t,a\right)$ does not depend on time and is denoted by $\Theta^{\star}\left(a\right)$.\\ Different conclusions are obtained according to the value of the basic reproduction number of infection given by $$R(0)=\displaystyle\int_0^L c\left(b\right)\Gamma\left(b\right)\displaystyle\int_0^b \dfrac{\beta\left(\sigma\right)}{\Gamma\left(\sigma\right)}\exp\left(-\displaystyle\int_0^{\sigma}\Theta^{\star}\left(\eta\right)d\eta\right)d\sigma db,$$ where  $\Gamma\left(b\right)=\exp\left(-\displaystyle\int_0^b \gamma\left(\eta\right)d\eta\right)$. If $R\left(0\right)\leq 1,$ there is only one epidemic steady-state, the disease-free equilibrium, $\left(s^{\star}, i^{\star}\right)=\left(\exp\left(-\displaystyle\int_0^{a}\Theta^{\star}\left(\eta\right)d\eta\right),0\right)$. Otherwise, if $R\left(0\right)>1$, there are two endemic steady-states, one corresponding to the disease-free equilibrium and one endemic equilibrium. The stability of equilibra is performed by studying the linearized system around the equilibrium $\left(s^{\star}, i^{\star}\right)$ denoted by $x_e$ in what follows. A proof showing that the principle of linearized stability can be applied here is developed. This principle shows that, under some hypothesis, the stability of the linearized system implies the local stability of the considered equilibrium for the nonlinear system. A proof of this principle, used without proof in \cite{Inaba2006}, is performed and detailed using recent theoretical arguments. The proof is based on a particular case (with space $X$=$Y$) of a result developed in \cite[Theorem 9]{hastir_2020c}, that extends results of \cite{Morris_2018}, on Banach spaces.
	\begin{lemma}\cite[Theorem 9]{hastir_2020c}
		\label{Hastir}
		Consider a semilinear system of the form \begin{equation}\label{NL}
			\left\{\begin{array}{l}
				\dot{x}=\mathcal{A}x+\mathcal{N}\left(x\right)\\
				x\left(0\right)=x_0
			\end{array}\right.
		\end{equation}
		where $\mathcal{A}$ is a linear operator on its domain $\mathcal{D}\left(\mathcal{A}\right)$, which is a linear subspace of a Banach spaxe $X$, and $\mathcal{N}$ is a nonlinear operator such that $\mathcal{N} : \mathcal{D}\left(\mathcal{A}\right)\cap\mathcal{D}\left(\mathcal{N}\right)\subset X \to X$.\\
		Assume that \eqref{NL} admits an equilibrium profile $x_e$, i.e there exists $x_e \in \mathcal{D}\left(A\right)\cap \mathcal{D}\left(N\right)$ such that $$ \mathcal{A}x_e+\mathcal{N}\left(x_e\right)=0.$$\\
		Assume that the following conditions hold: $\mathcal{A}$ is quasidissipative, i.e. there exists $l_{\mathcal{A}}>0$ such that the operator $\mathcal{A}-l_{\mathcal{A}}I$ is dissipative on $\mathcal{D}\left(\mathcal{A}\right)\cap\mathcal{D}\left(\mathcal{N}\right)$; the nonlinear operator $\mathcal{N}$ is Lipschitz continuous on $\mathcal{D}\left(\mathcal{A}\right)\cap\mathcal{D}\left(\mathcal{N}\right)$ with respect to the $X$ norm; the operator $\mathcal{A}+\mathcal{N}$ is the infinitesimal generator of a nonlinear $C_0-$semigroup $\left(S\left(t\right)\right)_{t\geq 0}$ on $X$; the Gâteaux derivative $d\mathcal{N}\left(x_e\right)$ of $\mathcal{N}$ at $x_e$ is a bounded linear operator on $X$, the Gâteaux linearized dynamics of \eqref{NL} is given by \begin{equation}\label{L}
			\left\{\begin{array}{l}
				\dot{\bar{x}}=\left(\mathcal{A}+d\mathcal{N}\left(x_e\right)\right)\bar{x}\\
				\bar{x}\left(0\right)=x_0-x_e=\bar{x}_0
			\end{array}\right.
		\end{equation} and the nonlinear semigroup $\left(S\left(t\right)\right)_{t\geq 0}$ is Fréchet differentiable with Fréchet derivative $\left(T_{x_e}\left(t\right)\right)_{t\geq 0}$ corresponding to the linear semigroup generated by the Gâteaux derivative of $\mathcal{A}+\mathcal{N}$ at $x_e$.\\
		Then, if $x_e$ is a (globally) exponentially stable equilibrium of the linearized system \eqref{L}, then it is a locally exponentially stable equilibrium\footnote{As defined in \cite{Hastir_2020_bis}, $x_e$ is a locally exponentially stable equilibrium if $\exists$ $ \alpha, \beta, \delta >0$ s.t $\forall$ $x_0\in \mathcal{D}\left(\mathcal{A}\right)\cap\mathcal{D}\left(\mathcal{N}\right) :\\ \Vert x_0-x_e \Vert < \delta \Rightarrow \Vert x\left(t\right)-x_e \Vert \leq \alpha e^{-\beta t}\Vert x_0-x_e\Vert.$} of \eqref{NL}. Moreover, if $x_e$ is an unstable equilibrium of \eqref{L}, it is locally unstable for the nonlinear system \eqref{NL}.
	\end{lemma}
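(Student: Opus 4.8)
The plan is to reduce the statement to a local contraction (resp.\ expansion) property of the time-$t$ maps of the nonlinear semigroup near $x_e$, exploiting the key hypothesis that the Fréchet derivative of $\left(S\left(t\right)\right)_{t\geq 0}$ at $x_e$ is exactly the linear semigroup $\left(T_{x_e}\left(t\right)\right)_{t\geq 0}$ generated by $\mathcal{A}+d\mathcal{N}\left(x_e\right)$. First I would translate the equilibrium to the origin via $\bar{x}=x-x_e$; since $x_e$ is an equilibrium we have $S\left(t\right)x_e=x_e$ for all $t$, so the flow of $\bar{x}$ is $\bar{x}\left(t\right)=S\left(t\right)\left(x_e+\bar{x}_0\right)-x_e$. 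Fréchet differentiability then reads $S\left(t\right)\left(x_e+\bar{x}_0\right)-x_e=T_{x_e}\left(t\right)\bar{x}_0+R\left(t,\bar{x}_0\right)$ with $\Vert R\left(t,\bar{x}_0\right)\Vert=o\left(\Vert\bar{x}_0\Vert\right)$ as $\bar{x}_0\to 0$ for each fixed $t$.

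For the stability claim, suppose the linearized system is globally exponentially stable, so $\Vert T_{x_e}\left(t\right)\Vert\leq M e^{-\omega t}$ for constants $M,\omega>0$. I would fix $t_0>0$ large enough that $q:=M e^{-\omega t_0}<1$. Applying the Fréchet expansion at time $t_0$ and absorbing the higher-order remainder into the gap $1-q$, I obtain, for all $\bar{x}_0$ in a sufficiently small ball of radius $\delta$, the local contraction estimate $\Vert S\left(t_0\right)\left(x_e+\bar{x}_0\right)-x_e\Vert\leq\tilde{q}\Vert\bar{x}_0\Vert$ with some $\tilde{q}\in\left(q,1\right)$. Iterating the map $S\left(t_0\right)$ then gives geometric decay $\Vert S\left(n t_0\right)\left(x_e+\bar{x}_0\right)-x_e\Vert\leq\tilde{q}^{\,n}\Vert\bar{x}_0\Vert$ along the sampling instants $n t_0$. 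To pass to genuine continuous-time exponential stability, I would interpolate over each interval $\left[n t_0,\left(n+1\right)t_0\right]$ using a Lipschitz bound on the nonlinear semigroup of the form $\Vert S\left(t\right)u-S\left(t\right)v\Vert\leq e^{\left(l_{\mathcal{A}}+L_{\mathcal{N}}\right)t}\Vert u-v\Vert$, where $L_{\mathcal{N}}$ is the Lipschitz constant of $\mathcal{N}$; this bound follows from the quasidissipativity of $\mathcal{A}$ together with the Lipschitz continuity of $\mathcal{N}$ by standard nonlinear semigroup estimates, and it controls the overshoot on each subinterval. Combining the geometric decay at the sampling instants with this bounded growth between them yields $\Vert x\left(t\right)-x_e\Vert\leq\alpha e^{-\beta t}\Vert x_0-x_e\Vert$ for suitable $\alpha,\beta>0$, i.e.\ local exponential stability.

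For the instability claim, I would run the complementary argument: if $x_e$ is unstable for the linearized system, then $\left(T_{x_e}\left(t\right)\right)_{t\geq 0}$ possesses an exponentially growing mode, so the time-$t_0$ map $S\left(t_0\right)$ is locally expanding along the corresponding direction. Using the same Fréchet expansion to dominate the remainder $R\left(t_0,\cdot\right)$ on a small ball, I would show that trajectories launched arbitrarily close to $x_e$ along that direction are repelled, which precludes stability and gives local instability for \eqref{NL}.

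The main obstacle I anticipate is the passage from the pointwise-in-time Fréchet differentiability to an estimate usable on a ball of fixed radius $\delta$, and in particular the careful absorption of the remainder $R\left(t_0,\cdot\right)$ so that $\tilde{q}$ stays strictly below $1$ uniformly on that ball. This is exactly where the hypotheses interact: the quasidissipativity and the global Lipschitz property of $\mathcal{N}$ are needed not only to guarantee existence of the nonlinear $C_0$-semigroup but also to supply the Lipschitz growth bound that legitimizes both the discrete-to-continuous interpolation and the uniform control of the remainder.
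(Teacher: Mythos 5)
First, a point of comparison: the paper does not prove this lemma at all --- it is quoted verbatim as Theorem~9 of the cited reference [hastir\_2020c] and used as a black box in the proof of Theorem~1. So there is no ``paper's own proof'' to measure you against; what follows is an assessment of your argument on its own merits.

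Your treatment of the stability half is essentially the standard (and, to my knowledge, the correct) route: translate $x_e$ to the origin, use the pointwise-in-$t$ Fréchet expansion $S(t_0)(x_e+\bar{x}_0)-x_e=T_{x_e}(t_0)\bar{x}_0+o(\Vert\bar{x}_0\Vert)$ at a single sampling time $t_0$ chosen so that $Me^{-\omega t_0}<1$, absorb the remainder on a small ball to get a strict contraction of the time-$t_0$ map, iterate (noting that the iterates stay in the ball precisely because $\tilde{q}<1$), and interpolate between sampling instants with the quasi-contractive estimate $\Vert S(t)u-S(t)v\Vert\leq e^{(l_{\mathcal{A}}+L_{\mathcal{N}})t}\Vert u-v\Vert$, which indeed follows from quasidissipativity of $\mathcal{A}$ plus Lipschitz continuity of $\mathcal{N}$ (or, as the paper itself does elsewhere, from the mild-solution formula and Grönwall). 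This part is sound.

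The instability half is where you have a genuine gap. You assert that instability of the linearized system yields ``an exponentially growing mode'' along which $S(t_0)$ is locally expanding. Neither step is justified by the hypotheses. Instability of a linear $C_0$-semigroup on a Banach space does not produce an eigenvector with eigenvalue of modulus greater than one for $T_{x_e}(t_0)$: the growth can come from continuous or residual spectrum, or the semigroup can be unbounded without any single trajectory growing exponentially, and the spectral mapping theorem fails in general for $C_0$-semigroups. Moreover, even granting a growing direction, showing that nonlinear trajectories are actually repelled requires controlling the remainder along the \emph{whole} trajectory as it leaves every neighbourhood of $x_e$, which a pointwise-in-time Fréchet expansion at $x_e$ does not give you; the classical proofs of the instability half of the linearization principle need a spectral decomposition into an unstable (complemented) subspace and its complement, plus a gap condition. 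A cleaner route compatible with the stated hypotheses is the contrapositive: assume $x_e$ is locally stable for the nonlinear system, and use the Fréchet differentiability to transfer the resulting uniform bounds on $\Vert S(t)(x_e+\epsilon h)-x_e\Vert/\epsilon$ to bounds on $\Vert T_{x_e}(t)h\Vert$, contradicting instability of the linearization --- but even this requires some uniformity in $t$ of the remainder estimate that you would need to extract explicitly from the differentiability hypothesis. As written, the instability claim is not proved.
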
    
	\begin{thm}\label{linearised_stab}
		If $x_e$ is a (globally) exponentially stable equilibrium of the linearization of the nonlinear HNPIDE Model \eqref{PDE_SIR_normalised_null_BC}, then it is a locally exponentially stable equilibrium of Model \eqref{PDE_SIR_normalised_null_BC}. Moreover, if $x_e$ is an unstable equilibrium of the linearization of Model \eqref{PDE_SIR_normalised_null_BC}, it is locally unstable for Model \eqref{PDE_SIR_normalised_null_BC}.
	\end{thm}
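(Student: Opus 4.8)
The plan is to recast the HNPIDE Model \eqref{PDE_SIR_normalised_null_BC}, with the Lipschitz state feedback $\Theta(t,a)=F(\hat{s}(t,a),i(t,a))$ substituted in and with $P(t,b)$ replaced by its limit $c(b)$, as an abstract semilinear system of the form \eqref{NL} on a suitable Banach space, and then to verify one by one the hypotheses of Lemma \ref{Hastir}. I would take the state $x=(\hat{s},i)$ in $X=L^2(0,L)\times L^2(0,L)$, collect the transport and linear decay parts into the linear operator
\[
\mathcal{A}(\hat{s},i)=\left(-\alpha\,\partial_a\hat{s},\,-\alpha\,\partial_a i-\gamma(\cdot)\,i\right),
\]
with domain $\mathcal{D}(\mathcal{A})=\{(\hat{s},i)\in H^1(0,L)^2 : \hat{s}(0)=i(0)=0\}$ encoding the homogeneous boundary conditions, and put the remaining terms, namely the feedback contribution $-F(\hat{s},i)(1+\hat{s})$ and the bilinear infection terms $\pm\,\beta(\cdot)(1+\hat{s})\int_0^L i(b)c(b)\,db$, into the nonlinear operator $\mathcal{N}$. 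Since the well-posedness discussion guarantees that the physically relevant solutions stay in the invariant set $\Omega=\{0\le s\le 1,\ 0\le i\le 1\}$, I would restrict $\mathcal{N}$ to this bounded, positively invariant region.

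The structural hypotheses are then checked as follows. The existence of the equilibrium profile $x_e=(s^\star-1,0)$ with $\mathcal{A}x_e+\mathcal{N}(x_e)=0$ is exactly the disease-free equilibrium identified earlier. Quasidissipativity of $\mathcal{A}$ follows from integration by parts: for $x\in\mathcal{D}(\mathcal{A})$,
\[
\langle\mathcal{A}x,x\rangle=-\tfrac{\alpha}{2}\big(\hat{s}(L)^2+i(L)^2\big)-\int_0^L\gamma\,i^2\,da\le \|\gamma\|_\infty\|x\|_X^2,
\]
so that $\mathcal{A}-l_{\mathcal{A}}I$ is dissipative for $l_{\mathcal{A}}=\|\gamma\|_\infty+1$ (with $\gamma\ge 0$ one may even take any $l_{\mathcal{A}}>0$). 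That $\mathcal{A}+\mathcal{N}$ generates a nonlinear $C_0$-semigroup $(S(t))_{t\ge 0}$ is precisely the content of the well-posedness statement recalled at the start of Section \ref{dynamical}, obtained there via the method of characteristics and semigroup theory.

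It remains to verify the two hypotheses involving the nonlinearity. For Lipschitz continuity of $\mathcal{N}$ with respect to the $X$-norm I would exploit the boundedness of $\Omega$: the map $(\hat{s},i)\mapsto\int_0^L i(b)c(b)\,db$ is a bounded linear functional (since $c\in L^\infty$), multiplication by $\beta$ and by $c$ are bounded because $\beta,\gamma\in L^\infty$, and on $\Omega$ the factors $1+\hat{s}=s$ and $i$ are bounded by $1$; together with the assumed Lipschitz continuity of $F$ this makes each term of $\mathcal{N}$ a product of uniformly bounded, Lipschitz factors, hence Lipschitz on $\mathcal{D}(\mathcal{A})\cap\Omega$. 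For the Gâteaux derivative at $x_e$, linearizing the bilinear and feedback terms and using $i^\star=0$ produces an operator whose entries are multiplication by $L^\infty$ functions composed with the bounded functional $\int_0^L(\cdot)\,c\,db$ and with the feedback derivative $dF(x_e)$; each building block is bounded on $X$, so $d\mathcal{N}(x_e)$ is bounded and linear. Consequently $\mathcal{A}+d\mathcal{N}(x_e)$ is a bounded perturbation of the generator $\mathcal{A}$ and hence itself generates a $C_0$-semigroup $(T_{x_e}(t))_{t\ge0}$, and the linearized dynamics \eqref{L} is well defined.

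The main obstacle is the last hypothesis: the Fréchet differentiability of the nonlinear semigroup $(S(t))_{t\ge0}$ at $x_e$, with Fréchet derivative equal to $(T_{x_e}(t))_{t\ge0}$. I would establish this through the variation-of-constants representation of $S(t)x_0$ together with the local Lipschitz and differentiability bounds obtained above: writing the difference $S(t)(x_e+h)-S(t)x_e-T_{x_e}(t)h$ via Duhamel's formula and invoking Gronwall's inequality, and using that $\mathcal{N}(\cdot)-\mathcal{N}(x_e)-d\mathcal{N}(x_e)(\cdot-x_e)=o(\|\cdot-x_e\|)$ uniformly on $\Omega$, one controls the remainder by $o(\|h\|)$ on each finite horizon $[0,\tau]$. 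This is exactly the differentiability mechanism underlying \cite[Theorem 9]{hastir_2020c} and the extension of \cite{Morris_2018}. Once all hypotheses are in force, Lemma \ref{Hastir} applies verbatim and yields both claimed implications: (global) exponential stability of $x_e$ for the linearization \eqref{L} gives local exponential stability for \eqref{PDE_SIR_normalised_null_BC}, while instability of $x_e$ for \eqref{L} forces local instability for the nonlinear model.
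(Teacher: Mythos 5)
Your proposal is correct in outline and follows the same architecture as the paper's proof --- write the model as the semilinear system \eqref{NL} and check the hypotheses of Lemma \ref{Hastir} one by one --- but it diverges on three points. First, the paper works on $X=L^1(0,L)\times L^1(0,L)$ (the natural norm for densities, and the setting of the well-posedness results it cites), keeps $\mathcal{A}=-\tfrac{d\cdot}{da}I_2$ as pure transport with the $-\gamma i$ term inside $\mathcal{N}$, and establishes quasidissipativity by the direct range estimate $\Vert(\lambda I-\mathcal{A}+l_{\mathcal{A}}I)\hat{x}\Vert_X\ge\lambda\Vert\hat{x}\Vert_X$ rather than by an inner-product computation, which is unavailable in $L^1$; your $L^2$ integration-by-parts argument is fine per se but changes the state space, so the well-posedness and invariance statements you borrow from Section \ref{dynamical} would have to be re-justified in that norm. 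Second --- and this is the one genuine gap --- you keep the general Lipschitz feedback $F(\hat{s},i)$ inside $\mathcal{N}$ and then invoke ``the feedback derivative $dF(x_e)$''; only Lipschitz continuity of $F$ is assumed, so $d\mathcal{N}(x_e)$ need not exist in your formulation. The paper avoids this by freezing the input at its equilibrium value $\Theta^{\star}(a)$ before linearizing, so that the vaccination term $-\Theta^{\star}(\cdot)(1+\hat{s})$ is affine in the state; you should do the same. Third, for the Fréchet differentiability of the nonlinear semigroup the paper does not run the Duhamel/Grönwall estimate on $S(t)(x_e+h)-S(t)x_e-T_{x_e}(t)h$ directly: it reduces the claim to Fréchet differentiability of $\mathcal{N}$ at $\hat{x}_e$ --- verified by the explicit remainder $\Vert\mathcal{N}(\hat{x}_e+h)-\mathcal{N}(\hat{x}_e)-d\mathcal{N}(\hat{x}_e)h\Vert=2\int_0^L\vert\beta(a)h_1(a)\lambda(h_2)\vert\,da=O(\Vert h\Vert^2)$ --- together with continuous dependence of $S(t)$ on the initial condition, obtained by Grönwall; your direct estimate proves the same thing by the same underlying mechanism, so either route is acceptable once the previous two points are repaired.
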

	\begin{proof}
	Model \eqref{PDE_SIR_normalised_null_BC} with the input at equilibrium can be rewritten as the abstract differential equation \begin{equation*}
		\left\{\begin{array}{l}
			\dot{\hat{x}}=\mathcal{A}\hat{x}+\mathcal{N}\left(\hat{x}\right)\\
			\hat{x}\left(0\right)=\hat{x}_0
		\end{array}\right.
	\end{equation*} where $\hat{x}=\left(
	\hat{s}, i
	\right)^T$, $\mathcal{A}=-\dfrac{d\cdot}{da}I_2$
	with $I_2$ the identity matrix of dimension $2$, \\$\mathcal{D}\left(\mathcal{A}\right)=\left\{\hat{x}\in X : \hat{s}, i \in AC\left[0,L\right] \text{ and } \hat{s}\left(0\right)=i\left(0\right)=0\right\}$ where $X=L^1\left(0,L\right)\times L^1\left(0,L\right)$. Note that $X$ is a Banach space and its norm is defined for all $x=\left(x_1,x_2\right)^T \in X$ by $\Vert x \Vert_X:=\Vert x \Vert=\Vert x_1 \Vert_1 + \Vert x_2\Vert_1$ where $\Vert x \Vert_1$ is the usual norm on $L^1\left(0,L\right)$. Moreover $\mathcal{N} : \mathcal{D}\left(\mathcal{N}\right) \rightarrow X $ is defined for all $\hat{x}\in X$ by $$\mathcal{N}\left(\hat{x}\right)=\begin{pmatrix}
		-\left(\Theta^{\star}\left(\cdot\right)+\beta\left(\cdot\right)\displaystyle\int_0^L i(b)c\left(b\right) db\right)\left(1+\hat{s}\left(\cdot\right)\right)\\
		-\gamma\left(\cdot\right)i\left(\cdot\right)+\beta\left(\cdot\right)\left(1+\hat{s}\left(\cdot\right)\right)\displaystyle\int_0^L i(b)c\left(b\right) db
	\end{pmatrix}$$\\
where \\ $\mathcal{D}\left(\mathcal{N}\right)=\left\{\hat{x}\in X : -1\leq \hat{s} \leq 0, 0\leq i \leq 1 \text{ a.e. on }[0,L]\right\}.$
	These operators satisfy the hypothesis of Lemma \ref{Hastir}. Indeed, let $\lambda>0$ and $\hat{x}=\left(\hat{s},i\right)^T \in \mathcal{D}\left(\mathcal{A}\right)\cap\mathcal{D}\left(\mathcal{N}\right)$ be arbitrarily fixed, knowing that $-1\leq \hat{s}<0$ and $0\leq i \leq 1$, we get the following inequalities for any $l_{\mathcal{A}}>0$: \begin{align*}
		\Vert \left(\lambda I - \mathcal{A} + l_{\mathcal{A}}I\right)\hat{x} \Vert_X &= \displaystyle\int_0^L \vert \left(\lambda+ l_{\mathcal{A}}\right)\hat{s}\left(a\right)+\frac{d\hat{s}\left(a\right)}{da} \vert da \\& +\displaystyle\int_0^L \vert \left(\lambda+ l_{\mathcal{A}}\right)i\left(a\right)+\frac{di\left(a\right)}{da} \vert da \\
		&\geq \vert \displaystyle\int_0^L \left(\lambda+ l_{\mathcal{A}}\right)\hat{s}\left(a\right) da \vert \\& + \vert \displaystyle\int_0^L \left(\lambda+ l_{\mathcal{A}}\right)i\left(a\right) da \vert \\
		& \geq \lambda \Vert \hat{x} \Vert_X.
	\end{align*}
	Moreover, using the fact that $F$ is assumed to be Lipschitz continuous, it can be shown that $\mathcal{N}$ is Lipschitz continuous on $\mathcal{D}\left(\mathcal{A}\right)\cap\mathcal{D}\left(\mathcal{N}\right)$. Using Theorem 1.2 from \cite[Chap. 6, Sect. 1]{Pazy}, we conclude the existence of a mild solution $\hat{x}\left(t\right)=S\left(t\right)\hat{x}_0$ for all $t\geq 0$.
	Furthermore, the Gâteaux derivative of $\mathcal{N}$ at $\hat{x}_e=\left(\hat{s}^{\star}\left(a\right),i^{\star}\left(a\right)\right)^T=\left(s^{\star}\left(a\right)-1,i^{\star}\left(a\right)\right)^T$ is given by \begin{align*}
		d\mathcal{N}&\left(\hat{x}_e\right)z=\displaystyle\lim\limits_{\epsilon\to 0} \dfrac{\mathcal{N}\left(\hat{x}_e+\epsilon z\right)-\mathcal{N}\left(\hat{x}_e\right)}{\epsilon}\\&=\begin{pmatrix}
			-\beta\left(\cdot\right)\left(1+\hat{s}^{\star}\left(\cdot\right)\right)\lambda\left(y\right)-x\left(\Theta^{\star}\left(\cdot\right)+\beta\left(\cdot\right)\lambda\left(i^{\star}\right)\right)\\
			-\gamma\left(\cdot\right)+\beta\left(\cdot\right)\left(1+\hat{s}^{\star}\left(\cdot\right)\right)\lambda\left(y\right)+x\beta\left(\cdot\right)\lambda\left(i^{\star}\right)
		\end{pmatrix} 
	\end{align*} for all $z=\left(x,y\right)^T\in X$, where $\lambda\left(z\right)=\displaystyle\int_0^Lz\left(b\right)P\left(b\right) db$.
	Using the fact that $\gamma\left(\cdot\right)$ and $\beta\left(\cdot\right)$ are bounded and $1+\hat{s}^{\star}\left(a\right)\leq 1$ we can show that $d\mathcal{N}\left(\hat{x}_e\right)$ is bounded. Moreover it is a linear operator. To prove the last assumption about the Fréchet differentiability of the nonlinear semigroup, it suffices to prove that the nonlinear operator $\mathcal{N}$ is Fréchet differentiable at $\hat{x}_e$ and that $\left(S\left(t\right)\right)_{t\geq 0}$ depends continuously on the initial conditions. $\mathcal{N}$ is Fréchet-differentiable at $\hat{x}_e$ if there exists a bounded linear operator $DN\left(\hat{x}_e\right):X\to X$ such that, for all $h=\left(h_1,h_2\right)^T\in X$, $\displaystyle\lim_{\Vert h\Vert \to 0}\dfrac{\Vert \mathcal{N}\left(\hat{x}_e+h\right)-\mathcal{N}\left(\hat{x}_e\right)-DN\left(\hat{x}_e\right)h\Vert}{\Vert h \Vert}=0$. It can be shown that $d\mathcal{N}\left(\hat{x}_e\right)$ is convenient. Indeed, $\Vert \mathcal{N}\left(\hat{x}_e+h\right)-\mathcal{N}\left(\hat{x}_e\right)-d\mathcal{N}\left(\hat{x}_e\right)h\Vert$\\ $ = 2\displaystyle\int_0^L\vert \beta\left(a\right)h_1\left(a\right)\lambda\left(h_2\right) \vert da.$ Dividing this quantity by $\Vert h \Vert$, we can show that it is smaller than $K\Vert h_1 \Vert_1$ which tends to $0$. Therefore, $\mathcal{N}$ is Fréchet differentiable. Moreover, considering the change of variables $\tilde{x}=\hat{x}-\hat{x}_e$, we obtain the following abstract differential equation:  \begin{equation*}
		\left\{\begin{array}{l}
			\dot{\tilde{x}}=\mathcal{A}\hat{x}+\tilde{\mathcal{N}}\left(\tilde{x}\right)\\
			\tilde{x}\left(0\right)=\tilde{x}_0
		\end{array}\right.
	\end{equation*} where $\tilde{\mathcal{N}}:X \to X$ is given by $\tilde{\mathcal{N}}\left(\tilde{x}\right)=\mathcal{N}\left(\tilde{x}+\hat{x}_e\right)-\mathcal{N}\left(\hat{x}_e\right)$ and $A$ is the infinitesimal generator of a semigroup of contraction $\left(T\left(t\right)\right)_{t\geq 0}.$ Therefore, with $\hat{x}(t)=S(t)\hat{x}_0$, \begin{align*}
		\Vert \hat{x}(t)\Vert &\leq \Vert T\left(t\right)\tilde{x}_0\Vert + \displaystyle\int_0^t \Vert T\left(t-s\right) \Vert \Vert \tilde{\mathcal{N}}\left(s,\tilde{x}\left(s\right)\right) \Vert ds\\
		&\leq \Vert \tilde{x}_0\Vert + \displaystyle\int_0^t \Vert \mathcal{N}\left(\tilde{x}+\hat{x}_e\right)-\mathcal{N}\left(\hat{x}_e\right) \Vert ds\\
		&\leq \Vert \tilde{x}_0\Vert + K \displaystyle\int_0^t \Vert \tilde{x}(s) \Vert ds.\\
		\text{Hence,}\\
		\Vert \hat{x}(t)\Vert&\leq \Vert \tilde{x}_0\Vert e^{Kt},
	\end{align*}
	by Grönwall's inequality.
	Lemma \ref{Hastir} concludes the proof. 
		\end{proof}\\ \\
	Using Theorem \ref{linearised_stab}, semigroup theory and property of operators (analytic operators, compact operators, non-supporting operators, ...), the stability of the equilibria is obtained (see \cite{Inaba1990} for details). 
	\begin{coro}\label{coro1}
		The disease free equilibrium is locally exponentially stable when $R\left(0\right)\leq 1$ but is locally exponentially unstable if $R\left(0\right)> 1$ while the endemic equilibrium is locally exponentially stable if $R\left(0\right)> 1$.  
	\end{coro}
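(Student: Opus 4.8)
The plan is to reduce the nonlinear stability question to a spectral analysis of the linearized generator and then to tie the location of its spectrum to the threshold value $R(0)$. By Theorem~\ref{linearised_stab}, for each equilibrium $x_e$ (the disease-free equilibrium $\left(s^{\star},i^{\star}\right)=\left(\exp(-\int_0^a\Theta^{\star}),0\right)$ and, when $R(0)>1$, the endemic one) it suffices to decide whether $x_e$ is a (globally) exponentially stable or an unstable equilibrium of the linearized dynamics \eqref{L}, whose generator is $\mathcal{L}_e:=\mathcal{A}+d\mathcal{N}\left(x_e\right)$. Thus the whole statement follows once I show that the spectral bound $s\left(\mathcal{L}_e\right)$ is negative at the disease-free equilibrium when $R(0)<1$ and at the endemic equilibrium when $R(0)>1$ (stability), and positive at the disease-free equilibrium when $R(0)>1$ (instability).

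The key preliminary step is a spectrum-determined growth condition, so that the sign of $s\left(\mathcal{L}_e\right)$ actually governs exponential (in)stability of \eqref{L}. I would obtain this from the structure of $\mathcal{A}=-\dfrac{d\cdot}{da}I_2$: its resolvent is given by an explicit expression along characteristics and is compact on $X=L^1\times L^1$, while $d\mathcal{N}\left(x_e\right)$ is bounded (shown in the proof of Theorem~\ref{linearised_stab}); hence $\mathcal{L}_e$ has compact resolvent and its spectrum consists of isolated eigenvalues of finite algebraic multiplicity. Because the perturbed transport semigroup is eventually compact, hence norm-continuous, the growth bound equals $s\left(\mathcal{L}_e\right)$. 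Solving the eigenvalue equation $\mathcal{L}_e\phi=\lambda\phi$ by integration along characteristics then yields a scalar characteristic equation for the eigenvalues.

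For the disease-free equilibrium the analysis simplifies because $i^{\star}=0$ makes the linearization block-triangular: the $\hat{s}$-perturbation obeys $\dot{x}=-\partial_a x-\Theta^{\star}x$, which is exponentially stable by itself, and the infective perturbation decouples as $\dot{y}=-\partial_a y-\gamma y+\beta s^{\star}\int_0^L y(b)c(b)\,db$. Its eigenvalues solve $\mathcal{K}(\lambda)=1$ with
\[
\mathcal{K}(\lambda)=\int_0^L c(b)\Gamma(b)\,e^{-\lambda b}\int_0^b\frac{\beta(\sigma)}{\Gamma(\sigma)}\,e^{\lambda\sigma}\exp\!\Big(-\int_0^\sigma\Theta^{\star}(\eta)\,d\eta\Big)\,d\sigma\,db,
\]
and one checks directly that $\mathcal{K}(0)=R(0)$ and that $\mathcal{K}$ is strictly decreasing along the real axis (since $b\ge\sigma$). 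The positivity of the integrand gives $|\mathcal{K}(\mu+i\nu)|\le\mathcal{K}(\mu)$, so the unique real root $\lambda^{\star}$ dominates all others in real part; this is precisely where the positive-operator machinery (the associated next-generation operator being non-supporting, together with Krein--Rutman/Perron--Frobenius) guarantees that $\lambda^{\star}$ is the real, simple, dominant eigenvalue equal to $s\left(\mathcal{L}_e\right)$. Monotonicity then gives $\lambda^{\star}<0\Leftrightarrow R(0)<1$ and $\lambda^{\star}>0\Leftrightarrow R(0)>1$, yielding stability when $R(0)<1$ and instability when $R(0)>1$; the borderline $R(0)=1$, where $\lambda^{\star}=0$, is critical and would be handled separately as in \cite{Inaba1990}.

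The main obstacle is the endemic equilibrium, where $i^{\star}\not\equiv0$, so the linearization no longer decouples and one must study the full coupled $2\times2$ integro-differential eigenvalue problem. Here the characteristic equation is genuinely two-dimensional and the sign of $s\left(\mathcal{L}_e\right)$ cannot be read off by elementary monotonicity. The plan is to cast the dominant eigenvalue as the spectral radius of an associated resolvent-positive operator, invoke the non-supporting property to locate it on the real axis, and exploit the fixed-point relation defining the endemic state to prove $s\left(\mathcal{L}_e\right)<0$ when $R(0)>1$. Once each spectral bound is signed, Theorem~\ref{linearised_stab} converts these linear conclusions into the claimed local exponential (in)stability statements for the nonlinear model, completing the proof.
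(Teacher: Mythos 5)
Your proposal follows essentially the same route as the paper, whose entire proof of this corollary is a one-line appeal to Theorem~\ref{linearised_stab} together with ``semigroup theory and property of operators (analytic operators, compact operators, non-supporting operators, ...)'' with the details deferred to \cite{Inaba1990}; your sketch simply fills in that cited machinery (characteristic equation along characteristics, $\mathcal{K}(0)=R(0)$, dominance of the real eigenvalue via non-supporting/Krein--Rutman arguments). Your remark that the borderline case $R(0)=1$ yields a zero dominant eigenvalue, so that exponential stability does not follow from the linearization alone, is a fair caveat that the paper glosses over.
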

	
	\subsection{Numerical Simulations}\label{NS}
	Results of the previous subsection are confirmed using numerical simulations where no control is considered. Most of Parameters are taken from \cite{Okuwa} where the PIDE Model is used. First, note that $L$ is fixed to $1$ in order to normalize the age interval as $\left[0,1\right)$. The age-specific death-rate is given by $\mu(a)=\left(10\left(1-a\right)^2\right)^{-1}$ with $a\in\left[0,1\right[$. Therefore, $l(a)=\exp\left(-a/\left(10\left(1-a\right)\right)\right), a\in\left[0,1\right).$ If $B$ is chosen equals to $1/\int_0^1 l(a)da$, $B$ equals $1.2527$. Then, the total population is normalized ($\int_0^1 N(a)da=1$). In addition, the age-dependent recovery rate is defined by $$\gamma\left(a\right)=100.$$
To be consistent with the following analytical developments, the transmission coefficient used is not the one in \cite{Okuwa}. Indeed, in Section \ref{positive_PDE},
	$\dfrac{\gamma(a)+\mu(a)}{\beta(a)}$ needs to be in $C^1\left[0,L\right]$. Therefore $\beta(a)$ has to be differentiable for all $a \in \left[0,L\right]$ which is not the case in $0$ for the choice in \cite{Okuwa}. Therefore 	the transmission coefficient is defined as $$\beta(a)=\beta_0\left(\sin(a)e^{-2a}+\dfrac{1}{100}\right). $$
	with $\beta_0=600$ or $800$.
	Moreover, the parameters that are used in the numerical simulations are listed in Table \ref{Okuwa_param}. Observe that the age is normalized. Hence there are no units for the age and no units are mentioned for the time.
	\begin{table}[h]
		\centering
		\begin{tabular}{lll}
			\hline
			Parameter & Symbol & Value \\ \hline
			Maximum age & $L$ & $1$  \\
			Time frame & $T$ & $20$ \\
			Time stepsize & $\delta t$  & $0.001$  \\
			Age stepsize & $\delta a$ & $0.01$\\ \hline
		\end{tabular}
		\caption{Model parameters and values}
		\label{Okuwa_param}
	\end{table}
	Finally, the initial conditions are sligthly modified from \cite{Okuwa} in order to maintain consistency between initial conditions and boundary conditions (i.e when $a$ and $t$ are equal to $0$.) Therefore, we set \begin{align*}
		s_0(a)&=1-i_0(a),\\ i_0(a)&=\left\{\begin{array}{ll}
			\hat{i}_0\left(a\right)-\hat{i}_0\left(0\right) &\text{ if } i_0\left(a\right)\geq 0\\
			0 &	\text{ else,}
		\end{array}\right.\\
		r_0(a)&=0
	\end{align*} where   $$
	\hat{i}_0\left(a\right)=\dfrac{1}{2}e^{-100\left(a-\dfrac{1}{2}\right)^2}\times 10^{-3}.$$The numerical method used in simulation is a forward time - backward space finite difference scheme. The stability of this scheme is ensured by the necessary and sufficient conditions of Courant-Friedrichs-lewy which requires in this case that $\left|\dfrac{\delta t}{\delta a}\right| \leq 1,$ as mentioned in \cite{alexanderian_2011}.  \\
	First, note that similar results as the ones shown in Figures \ref{fig2} and \ref{fig3}, were obtained using Model \eqref{PDE_SIR}. Thus both systems can be used interchangeably. Second, in Figure \ref{fig2}, we can observe that the dynamics of (the proportion, $Nb_I(t,a)$, of) I-individuals\footnote{This quantity is obtained by integrating the density $i(t,b)$ on the intervals $\left[a_{k-1},a_k\right)$ for k=1,...,n. Moreover, all other figures also depict proportions of individuals.} tends to 0 as time increases. This is consistent with the fact that there is only one stable equilibrium when $R\left(0\right)\leq 1$, which is the disease-free equilibrium. Contrariwise, in Figure \ref{fig3} the dynamics of I-individuals tends to an endemic equilibrium where there are still I-individuals in the population when time increases. 
	\begin{figure}
		\begin{center}
			\includegraphics[height=5cm]{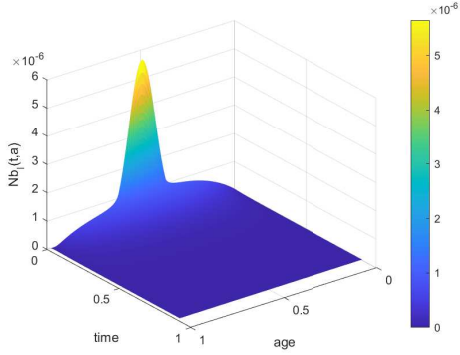}    
			\caption{Dynamics of I-individuals from the NPIDE Model without control for $R\left(0\right)=0.8894$ }  
			\label{fig2}                                 
		\end{center}                                 
	\end{figure}
	\begin{figure}
		\begin{center}
			\includegraphics[height=5cm]{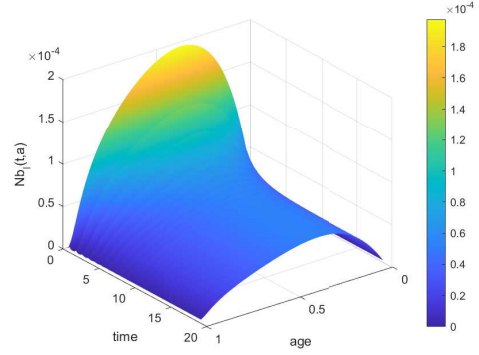}    
			\caption{Dynamics of I-individuals from the NPIDE Model without control for $R\left(0\right)=1.1859$ }  
			\label{fig3}                                 
		\end{center}                                 
	\end{figure}

	\section{Positive closed-loop stabilization of NODE model}\label{positive}
	
	In view of the dynamical analysis of the open loop system, it seems natural to want to stabilize the disease-free equilibrium when $R\left(0\right)>1$ since this equilibrium is unstable for the HNPIDE model (see Corollary \ref{coro1}) and we want to eradicate the disease. \\
	In the following, the aim is to design a feedback control law of vaccination $\Theta\left(t,a\right)$ such that, when it is applied, the corresponding state trajectory converges towards the disease-free equilibrium. \\
	Two vaccination laws are designed. The first one, detailed here, uses Isidori's theory on "Nonlinear Feedback for Multi-Input Multi-Output Systems" developed in \cite[Chap. 5]{Isidori} applying on finite dimensional systems. The second one, explained in next section, is deduced from the first design but acting on the infinite dimensional system. \\
	The current section is inspired on the methodology developed in \cite{Quesada} for SEIR model without age-dependency. We focus on the influence of the age of individuals, given a set of $2n$ ODE, that will be an intuition to solve the PIDE problem.
	\subsection{Model in normal form}
	The aim of this section is to use a coordinate change in order to write the model in normal form as stated in Isidori's theory \cite{Isidori}. The dynamics equations of the NODE Model \eqref{ODE} can be written equivalently in the state space form as a nonlinear control affine system 
	\begin{subequations}
		\label{state_space}
		\begin{align}
			\hspace{-0.3cm}\left\{\begin{array}{lp{0.2cm}l}
				\dot{x}\left(t\right)&=&f\left(x\left(t\right)\right)+g\left(x\left(t\right)\right)u\left(t\right)\\
				y\left(t\right)&=&h\left(x\left(t\right)\right)
			\end{array}\right.
			\intertext{where $x\left(t\right)=\left[i_1\left(t\right),...,i_n\left(t\right), s_1\left(t\right),...,s_n\left(t\right)\right]^T\in \mathbb{R}^{2n},$ for all $ t \geq 0$ is the state space vector, $h\left(x\left(t\right)\right)=\left[i_1\left(t\right),...,i_n\left(t\right)\right]^T\in\mathbb{R}^{n}, \forall t\geq 0$ is the measurable output function, assumed equals to the infectious population and $u\left(t\right)=\left[\theta_1\left(t\right),...,\theta_n\left(t\right)\right]^T\in\mathbb{R}^{n}, \forall t\geq 0$ is the input function. Moreover,}
			g\left(x\left(t\right)\right)=\begin{pmatrix}
				0_{n\times n}\\
				- diag(s_k)_{k=1,...,n}
			\end{pmatrix}
			\intertext{and} f\left(x\left(t\right)\right):=\left(
			f_1\left(x\left(t\right)\right) 
			\cdots
			f_{2n}\left(x\left(t\right)\right)
			\right)^T
		\end{align}
	\end{subequations}
	where
	\begin{align*}
		f_k\left(x\left(t\right)\right)&=
		T_ki_{k-1}\left(t\right)-\left(T_k+\gamma_k\right)i_k\left(t\right)\\ &+\beta_ks_k\left(t\right)\displaystyle\sum_{j=1}^n N_ji_j\left(t\right),\\
		f_{n+k}\left(x\left(t\right)\right)&=
		T_ks_{k-1}\left(t\right)-T_ks_k\left(t\right)\\&-\beta_ks_k\left(t\right)\displaystyle\sum_{j=1}^n N_ji_j\left(t\right)
	\end{align*} for $k=1,...,n$.\\ 
	Since the relative degree of the system equals the dimension of the state space for any $x\in\mathcal{D}=\Bigg\{x \text{ s.t } s_k\left(t\right)\neq 0\text{ for }$ \\$ \left. k=1,...,n \text{ and }\displaystyle\sum_{j=1}^n N_ji_j\left(t\right)\neq 0, t\geq 0 \right\}$, the nonlinear invertible coordinate change that is needed here is given by
	
	\begin{align}
		\label{change_of_variable}
		\bar{i}_k\left(t\right)&=h_k\left(x\left(t\right)\right)=i_k\left(t\right),\nonumber\\
		\bar{s}_k\left(t\right)&=L_fh_k\left(x\left(t\right)\right)\nonumber\\&=f_k\left(x\left(t\right)\right)\\
		&=T_ki_{k-1}\left(t\right)-\left(T_k+\gamma_k\right)i_k\left(t\right)\nonumber\\ &\hspace{0.3cm}+\beta_ks_k\left(t\right)\displaystyle\sum_{j=1}^n N_ji_j\left(t\right)\nonumber
	\end{align}	
	
	for $k=1,...,n$.
	Thanks to this coordinate change, the system is written in its normal form in the neighborhood of any $x\in\mathcal{D}$ by
	\begin{equation}\label{normal_form}
		\begin{aligned}
			\dfrac{d\bar{i}_k\left(t\right)}{dt}&=\bar{s}_k\left(t\right),\\
			\dfrac{d\bar{s}_k\left(t\right)}{dt}&=L_f^2h_k\left(x\left(t\right)\right)+L_{g_k}L_fh_k\left(x\left(t\right)\right)u_k\left(t\right)
		\end{aligned}
	\end{equation}
	for $k=1,...,n$, where $L_a^k b$ is  the $k$ th order Lie derivative of $b$ along the vector field $a$, as defined in \cite{Isidori}.
	\subsection{Feedback design}
	This section aims to design a feedback law that linearizes and stabilizes the system in normal form and implies the eradication of the epidemic from the population.\\ \\
	In order to design the linearizing feedback, the following matrices are defined,
	\begin{align}
		A\left(x\left(t\right)\right)&=diag\left(-\beta_ks_k\left(t\right)\displaystyle\sum_{j=1}^n N_ji_j\left(t\right)\right)_{k=1,...,n}\label{A}\\
		v\left(x\left(t\right)\right)&=\begin{pmatrix}
			v_1\left(x\left(t\right)\right) &\cdots & v_n\left(x\left(t\right)\right)
		\end{pmatrix}^T\label{v}\\
		\intertext{ such that } v_k\left(x\left(t\right)\right)&=-\alpha_2^kf_k\left(x\left(t\right)\right)-\alpha_1^ki_k\left(t\right), k=1,...,n\nonumber
		\intertext{where $\alpha_1^k$ and $\alpha_2^k$ are some free parameters that will be ajust to have stability. Moreover,}
		b\left(x\left(t\right)\right)&=\begin{pmatrix}
			b_1\left(x\left(t\right)\right) & \cdots & b_n\left(x\left(t\right)\right)
		\end{pmatrix}^T,\label{b}
		\intertext{where}  b_k\left(x\left(t\right)\right)&=L_f^2h_k\left(x\left(t\right)\right)\\&=\beta_ks_k\left(t\right)\displaystyle\sum_{j=1}^n N_j f_j\left(x\left(t\right)\right)\nonumber\\&+T_kf_{k-1}\left(x\left(t\right)\right)-\left(T_k+\gamma_k\right)f_k\left(x\left(t\right)\right)\nonumber\\&+\beta_k f_{n+k}\left(x\left(t\right)\right)\displaystyle\sum_{j=1}^n N_ji_j\left(t\right).\nonumber
	\end{align}
	
	\begin{lemma}\label{lemma_fdb}
		The state feedback control law defined by
		\begin{equation}
			u\left(t\right)=A^{-1}\left(x\left(t\right)\right)\left(v\left(x\left(t\right)\right)-b\left(x\left(t\right)\right)\right),
			\label{control}\end{equation} where $A$, $b$ and $v$ are given by \eqref{A}-\eqref{b}, applied on system \eqref{state_space}, induces the linear output closed-loop dynamics given by \begin{align}
			\ddot{y}\left(t\right)+\tilde{A}_2\dot{y}\left(t\right)+\tilde{A}_1y\left(t\right)=0.
			\label{CLdynamics}
		\end{align}
		where $\tilde{A}_1=diag(\alpha_1^i)$ and  $\tilde{A}_2=diag(\alpha_2^i)$ for $i=1,...,n$.
	\end{lemma}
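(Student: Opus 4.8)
The plan is to show directly that, under the feedback \eqref{control}, each output channel obeys a decoupled second-order linear ODE, and then to assemble the $n$ scalar identities into the matrix form \eqref{CLdynamics}. Since the relative degree of each output $y_k=h_k(x)=i_k$ equals two and the total relative degree equals the state dimension $2n$ on $\mathcal{D}$, the normal form \eqref{normal_form} gives $\dot{y}_k=\bar{s}_k=L_fh_k(x)=f_k(x)$ and, for a general MIMO affine system, $\ddot{y}_k=L_f^2h_k(x)+\sum_{j=1}^n L_{g_j}L_fh_k(x)\,u_j$. The whole argument reduces to identifying the decoupling matrix with $A(x)$ and then cancelling it with the feedback.

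First I would establish that the decoupling matrix, whose $(k,j)$ entry is $L_{g_j}L_fh_k(x)$, coincides with the diagonal matrix $A(x)$ of \eqref{A}. Because $L_fh_k=f_k$ and the input vector field in \eqref{state_space} has $j$-th column $g_j=-s_j\,e_{n+j}$ (with $e_{n+j}$ the $(n+j)$-th standard basis vector of $\mathbb{R}^{2n}$, so that only the $s_j$-component is actuated), one gets $L_{g_j}L_fh_k(x)=-s_j\,\partial f_k/\partial s_j$. The explicit form of $f_k$ shows that its only dependence on the susceptible variables is through the single term $\beta_k s_k\sum_m N_m i_m$, whence $\partial f_k/\partial s_j=\beta_k\,\delta_{kj}\sum_m N_m i_m$. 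Consequently the off-diagonal entries vanish and the diagonal entries equal $-\beta_k s_k\sum_m N_m i_m$, which is exactly $A(x)$. This also justifies that $A(x)$ is invertible precisely on $\mathcal{D}$, where $s_k\neq 0$ and $\sum_j N_j i_j\neq 0$, so that \eqref{control} is well defined.

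With this identification, the second output derivative reads $\ddot{y}=b(x)+A(x)u$, where $b(x)$ collects the terms $b_k=L_f^2h_k$ as in \eqref{b}. Substituting $u=A^{-1}(x)\bigl(v(x)-b(x)\bigr)$ cancels $b(x)$ and yields $\ddot{y}=v(x)$. It then remains to rewrite $v(x)$ in output coordinates: by \eqref{v}, $v_k(x)=-\alpha_2^k f_k(x)-\alpha_1^k i_k=-\alpha_2^k\dot{y}_k-\alpha_1^k y_k$, using the normal-form relations $\dot{y}_k=f_k(x)$ and $y_k=i_k$ from \eqref{change_of_variable}. Hence each channel satisfies $\ddot{y}_k+\alpha_2^k\dot{y}_k+\alpha_1^k y_k=0$, and collecting these with $\tilde{A}_2=\mathrm{diag}(\alpha_2^i)$ and $\tilde{A}_1=\mathrm{diag}(\alpha_1^i)$ gives exactly \eqref{CLdynamics}.

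The computation is essentially routine once the structure is exploited; the only genuinely delicate point — and the step I would treat most carefully — is the diagonality of the decoupling matrix $A(x)$. This relies on two structural features of the NODE model: that actuation enters only through the susceptible dynamics (the zero block in $g$), and that in $f_k$ the susceptible variables couple diagonally (each $f_k$ sees only $s_k$ among the $s_j$). If either feature failed, the decoupling matrix would be full, its inversion would mix the channels, and the clean channel-by-channel form \eqref{CLdynamics} would not follow.
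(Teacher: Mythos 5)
Your proof is correct and follows essentially the same route as the paper's: pass to the normal form, use the feedback to reduce $\ddot{y}$ to $v(x)$, and read off the decoupled second-order dynamics channel by channel. The only difference is that you explicitly verify the diagonality of the decoupling matrix $\bigl(L_{g_j}L_fh_k\bigr)_{k,j}=A(x)$, a step the paper leaves implicit by appealing to Isidori's theory; this is a welcome addition rather than a divergence.
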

	\begin{proof}
	According to Isidori's theory, the control law defined in \eqref{control} is obtained. It can be rewritten as 
	
	$u\left(t\right)=\begin{pmatrix}
		\dfrac{v_1-L_f^2h_1\left(x\left(t\right)\right)}{L_{g_1}L_fh_1\left(x\left(t\right)\right)} &
		\cdots &
		\dfrac{v_n-L_f^2h_n\left(x\left(t\right)\right)}{L_{g_n}L_fh_n\left(x\left(t\right)\right)}
	\end{pmatrix}^T.$
	
	Therefore, applying this control law to the dynamics in normal form \eqref{normal_form} linearizes the equations and gives  for $k=1,...,n$,
	\begin{align}
		\label{normal_form_linearise}
		\dfrac{d\bar{i}_k\left(t\right)}{dt}&=\bar{s}_k\left(t\right)\nonumber\\
		\dfrac{d\bar{s}_k\left(t\right)}{dt}&=v_k\left(t\right),\\
		&=-\alpha_2^k\bar{s}_k-\alpha_1^k\bar{i}_k\nonumber
	\end{align}
	which can be written as
	\begin{align*}
		\dot{\bar{x}}\left(t\right)&=\begin{pmatrix}
			0_{n \times n} & I_n\\
			-\tilde{A}_1 & -\tilde{A}_2
		\end{pmatrix}\bar{x}\left(t\right),\\
		&:=\bar{A}\bar{x}\left(t\right)
	\end{align*}
	where $\bar{x}\left(t\right)=\left[\bar{i}_1\left(t\right) \cdots \bar{i}_n\left(t\right) \bar{s}_1\left(t\right) \cdots \bar{s}_n\left(t\right) \right]^T$.
	The solution of this ODE is given by\\ $y\left(t\right)=C\bar{x}\left(t\right)=Ce^{\bar{A}t}\bar{x}\left(0\right)=\left(
	\bar{i}_1\left(t\right) \cdots  \bar{i}_n\left(t\right)\right)^T$.\\ Thus, $\dot{y}\left(t\right)=\left(
	\dfrac{d\bar{i}_1\left(t\right)}{dt} \cdots  \dfrac{d\bar{i}_n\left(t\right)}{dt}\right)^T$ and 
	\begin{align*}
		\ddot{y}\left(t\right)&=C\bar{A}^2e^{\bar{A}t}\bar{x}\left(0\right),\\&=\begin{pmatrix}
			-\alpha_1^1 \bar{i}_1\left(t\right)-\alpha_2^1\bar{s}_1\left(t\right) &
			\cdots &
			-\alpha_1^n \bar{i}_n\left(t\right)-\alpha_2^n\bar{s}_n\left(t\right)\\
		\end{pmatrix}^T,\\
		&=\tilde{A}_1y\left(t\right)+\tilde{A}_2\dot{y}\left(t\right).
	\end{align*}
		\end{proof}\\ \\
	Therefore, the feedback law \eqref{control} is linearizing for the model in normal form, which is in adequacy with Isidori's theory. Using this feedback on system \eqref{state_space}, the closed-loop model is given by
	\begin{align}	
		\label{CL}	
		\dfrac{di_k\left(t\right)}{dt}&=T_ki_{k-1}\left(t\right)-\left(T_k+\gamma_k\right)i_k\left(t\right)\nonumber\\&+\beta_ks_k\left(t\right)\displaystyle\sum_{j=1}^n N_ji_j\left(t\right),\nonumber\\
		\dfrac{ds_k\left(t\right)}{dt}&=\dfrac{1}{\beta_k\displaystyle\sum_{j=1}^n N_ji_j\left(t\right)}\left(f_k\left(x\left(t\right)\right)\left(T_k+\gamma_k-\alpha_2^k\right)\right.\\&\left.-\alpha_1^k i_k\left(t\right)-T_kf_{k-1}\left(x\left(t\right)\right)-\right.\nonumber\\&\left.\beta_ks_k\left(t\right)\displaystyle\sum_{j=1}^n N_j f_j\left(x\left(t\right)\right)\right)\nonumber
	\end{align}
	for $k=1,...,n,$ with $f_0\left(x\left(t\right)\right)=0$.\\
	This model can be written in a condensed way as
	\begin{subequations}
		\begin{align}
			\dot{x}\left(t\right)&=F\left(x\left(t\right)\right),\\
			\intertext{for $x=\left[i_1 \cdots i_n\hspace{0.1cm} s_1 \cdots s_n \right]^T$ and} F\left(x\left(t\right)\right)&=\left[F_1\left(x\left(t\right)\right) \cdots F_{2n}\left(x\left(t\right)\right)\right]^T,
		\end{align}
		
	\end{subequations}
	where 	\begin{align}
		\label{F}
		F_k\left(x\right)&=
		f_k\left(x\right),\nonumber\\
		F_{n+k}\left(x\right)&=\dfrac{1}{\beta_k\displaystyle\sum_{j=1}^n N_ji_j}\left(f_k\left(x\right)\left(T_k+\gamma_k-\alpha_2^k\right)\right.\\&\left.-\alpha_1^k i_k-T_kf_{k-1}\left(x\right)- \beta_k s_k\displaystyle\sum_{j=1}^n N_j f_j\left(x\right)\right)\nonumber
	\end{align} for $k=1,...,n$.
	\subsection{Stabilizing law}
	Moreover, in order to be effective, the feedback needs to ensure the eradication of I-individuals in the population.   \\
	\begin{thm}\label{stab_inf}\textbf{Stability of the I-individuals}\\
		Let the initial condition $x_0 \in R^{2n}_+$ be given. Assume that all roots $(-r_j^k)$ of the characteristic polynomial $P(s)$ associated with the closed-loop dynamics \eqref{CLdynamics} are in the open left half plane, i.e $Re(-r_j^k)<0$, by an appropriate choice of the control tuning parameters $\alpha_j^k>0$ for $j=1,2$ and $k=1,...,n$. \\
		Then the state feedback \eqref{control} implies the exponential convergence towards zero of the infected population $i_k(t)$ of NODE Model \eqref{ODE}, for $k=1,...,n$, as time tends to infinity.
	\end{thm}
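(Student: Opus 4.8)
The plan is to deduce the claim directly from the linear closed-loop output dynamics \eqref{CLdynamics} obtained in Lemma \ref{lemma_fdb}. First I would recall that applying the feedback \eqref{control} to system \eqref{state_space} forces the output $y(t)=\left[i_1(t),\ldots,i_n(t)\right]^T$ to satisfy $\ddot{y}(t)+\tilde{A}_2\dot{y}(t)+\tilde{A}_1 y(t)=0$ with $\tilde{A}_1=diag(\alpha_1^k)$ and $\tilde{A}_2=diag(\alpha_2^k)$. Since both matrices are diagonal, this system decouples into $n$ independent scalar equations $\ddot{y}_k+\alpha_2^k\dot{y}_k+\alpha_1^k y_k=0$, one per age class, where $y_k=i_k$.

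Next I would rewrite \eqref{CLdynamics} as the first-order system $\dot{\bar{x}}=\bar{A}\bar{x}$ with $\bar{x}=\left[y^T,\dot{y}^T\right]^T$ and $\bar{A}$ the block matrix already exhibited in the proof of Lemma \ref{lemma_fdb}. Its eigenvalues are exactly the roots of the characteristic polynomial $P(s)=\prod_{k=1}^n\left(s^2+\alpha_2^k s+\alpha_1^k\right)$, namely the values $-r_j^k$. By hypothesis these all lie in the open left half-plane, so $\bar{A}$ is Hurwitz and there exist $M,\omega>0$ with $\Vert e^{\bar{A}t}\Vert\leq M e^{-\omega t}$. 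Consequently $y(t)=Ce^{\bar{A}t}\bar{x}(0)$ decays exponentially, which yields the announced exponential convergence of each $i_k(t)=y_k(t)$ to zero. Equivalently, one can integrate each scalar equation explicitly and observe that its solution is a linear combination of the modes $e^{-r_j^k t}$ (with an extra factor $t$ in the defective case), every one of which decays exponentially since $Re(-r_j^k)<0$.

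The delicate point, and the one I expect to be the main obstacle, is to ensure that the feedback \eqref{control} is well defined along the entire trajectory, i.e. that $x(t)$ remains in the admissible set $\mathcal{D}$ on which $A\left(x(t)\right)=diag\left(-\beta_k s_k(t)\sum_{j=1}^n N_j i_j(t)\right)$ is invertible. Because $\sum_{j=1}^n N_j i_j(t)\to 0$ as the infected classes vanish, $A\left(x(t)\right)$ becomes singular precisely at the disease-free limit, and an under-damped mode (complex $-r_j^k$) could in principle steer $\sum_{j=1}^n N_j i_j(t)$ through zero at a finite time. I would close this gap by invoking the non-negativity of the states $i_k(t)$ guaranteed for $x_0\in\mathbb{R}^{2n}_+$ elsewhere in this section, so that $\sum_{j=1}^n N_j i_j(t)>0$ at every finite time and vanishes only asymptotically; the feedback is then well posed on $[0,\infty)$ and \eqref{CLdynamics} genuinely governs $y$, completing the argument.
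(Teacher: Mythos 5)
Your argument is essentially the paper's own proof: both rewrite the linear closed-loop output dynamics \eqref{CLdynamics} as a first-order system whose eigenvalues are the roots $-r_j^k$ of $P(s)$, conclude that the matrix is Hurwitz, and deduce exponential decay of $\bar{i}_k(t)=i_k(t)$. The well-definedness of the feedback on $\mathcal{D}$ that you flag is not resolved in the paper's proof either; the paper handles it outside the theorem via the switch-off law of Remark \ref{rem1}, and note that your proposed fix is not quite sufficient as stated, since non-negativity of $i_k$ only gives $\sum_j N_j i_j(t)\geq 0$ and the positivity result (Theorem \ref{pos_cdt}) itself presupposes $\theta_k\geq 0$, which the raw feedback \eqref{control} does not guarantee.
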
 
	\begin{proof}
	Since the closed-loop dynamics \eqref{CLdynamics} is a system of decoupled ODE's it can be written as
	\begin{align}
		\label{decoupled_ODE}
		\left\{
		\begin{array}{rll}
			\dot{\bar{x}}_{new}\left(t\right)&=&\hat{A}\bar{x}_{new}\left(t\right),\\
			y\left(t\right)&=&C\bar{x}_{new}\left(t\right)
		\end{array}
		\right.
	\end{align}
	with $\bar{x}_{new}=Px=\begin{pmatrix}
		\bar{i}_1&\bar{s}_1&\cdots& \bar{i}_n&\bar{s}_n
	\end{pmatrix}^T$ with $P$ a $2n \times 2n$ permutation matrix such that
	\begin{align*}
		P_{ij}=\left\{
		\begin{array}{rll}
			&1 & \text{ if } (i,j)=(2k+1,k+1) \text{ for } k=0,...,n-1 \\
			& 1 & \text{ if } (i,j)=(2k,n+k) \text{ for } k=1,...,n\\
			& 0 & \text{ otherwise},
		\end{array}
		\right.
	\end{align*}  $\hat{A}=blockdiag(\bar{A}_k)$, where $\bar{A}_k=\begin{pmatrix}
		0 & 1\\
		-\alpha_1^k & -\alpha_2^k
	\end{pmatrix}$ and $C=P\left[I_n \hspace{0.3cm} 0\right]$.\\
	Therefore, $\hat{A}$ is stable if all its eigenvalues are in the open left plane. However, the eigenvalues of $\hat{A}$ are those of the $\bar{A}_k$'s matrices. Moreover, those eigenvalues are the roots of the characteristic polynomial $P(s)=Det(sI-\bar{A}_k)=s^2-s\alpha_2^i+\alpha_1^i=(s+r_1^i)(s+r_2^i)$ with $\alpha_2^i=r_1^i+r_2^i$ and $\alpha_1^i=r_1^ir_2^i$. Therefore, the eigenvalues of the $\bar{A}_k$'s matrices are $-r_1^i$ and $-r_2^i$. Since they are of negative real part, then the control law exponentially stabilizes the model in normal form \eqref{normal_form}.	\\
	Therefore, $\bar{x}\left(t\right)$ exponentially converges asymptotically to zero. It follows that $\bar{i}_k\left(t\right)=i_k\left(t\right)$ converges to zero as time goes to infinity for $k=1,...,n$.\end{proof}\\ \\
	\begin{remark}\label{rem1} The control law is well-defined for $x \in \mathcal{D}$. However, since the aim is to eradicate the disease from the population, the infected population goes to zero as time tends to infinity. This implies that $\displaystyle\sum_{j=1}^n N_ji_j=\displaystyle\int_0^L I\left(t,a\right)da$ tends to zero. Therefore, as explained in \cite{Quesada}, we introduced a "switch-off" vaccination law, based on the fact that the disease is considered as being eradicated from the population when the infected population is greater than zero but small enough (for instance when there is numerically less than one individual in the population but more than zero). Therefore, we defined a threshold such that
	$0<\displaystyle\int_0^L I\left(t,a\right)da<\delta<1$. Therefore, in a practical situation, we use
	\begin{equation}\label{switch}u_s\left(t,a\right)=\left\{\begin{array}{l}
			u\left(t,a\right) \text{for }t\leq t^{\star},\\[0.4cm]
			0 \text{ for }t>t^{\star}
		\end{array}\right.
	\end{equation}
	where $$t^{\star}=min\left\{t\in\mathbb{R}^+ \vert \displaystyle\int_0^L I\left(t,a\right)da<\delta \text{ for }0<\delta<1\right\}.$$
\end{remark}
	\subsection{Positivity analysis}
	Another condition for the feedback design is that the feedback law  has to keep the positivity of the variables in the model (more precisely it has to keep them between $0$ and $1$) in order to have a physical meaning. Inspired by \cite{Tudor}, we highlight the following positivity condition.
	\begin{thm}\label{pos_cdt}	If $\theta_k(t)\geq 0$ for $k=1,...,n$, 
		then the set $B=\left\{\left(i_1,...,i_n, s_1,...,s_n : i_k \geq 0, s_k\geq 0, s_k+i_k\leq 1 \right.\right.$ $\left.\left. \text{for } k=1,...,n\right)\right\}$ is positively invariant for the ODE model \eqref{ODE}.
		\end{thm}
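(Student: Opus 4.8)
The plan is to establish positive invariance via Nagumo's subtangentiality theorem. Since $B$ is a closed convex polyhedron and the right-hand side of \eqref{ODE} is continuous in $x$ (the components $\theta_k$ being continuous in $t$, and the vector field being polynomial in the state), it suffices to verify that at every boundary point the vector field lies in the tangent cone of $B$, i.e. that it points into $B$ or is tangent to it. As $B$ is the intersection of the half-spaces $\{i_k\geq 0\}$, $\{s_k\geq 0\}$ and $\{s_k+i_k\leq 1\}$ for $k=1,\dots,n$, I would check the sign of the time derivative of each defining function on the face where the corresponding constraint is active, using throughout the non-negativity of the structural data $T_k=\rho_k+\mu_k\geq 0$, $\beta_k,\gamma_k\geq 0$ and $N_j\geq 0$.

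First, on a face where $i_k=0$, the term $-(T_k+\gamma_k)i_k$ of \eqref{ODE} vanishes and I am left with $\dot{i}_k=T_k i_{k-1}+\beta_k s_k\sum_{j=1}^n N_j i_j$, a sum of non-negative quantities (recall $i_0=0$ and that all $i_j,s_k\geq 0$ on $B$), so $\dot{i}_k\geq 0$ and the constraint $i_k\geq 0$ cannot be violated. Likewise, on a face $s_k=0$ both the quadratic infection term and the vaccination term $\theta_k s_k$ drop out, leaving $\dot{s}_k=T_k s_{k-1}\geq 0$ (with $s_0=1$ when $k=1$), so $s_k\geq 0$ is preserved.

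The decisive step, and the one I expect to carry the real content, is the face $s_k+i_k=1$. Adding the two equations of \eqref{ODE}, the infection terms $\pm\,\beta_k s_k\sum_{j}N_j i_j$ cancel exactly, which gives
\begin{align*}
\frac{d}{dt}\left(s_k+i_k\right)=T_k\left(s_{k-1}+i_{k-1}\right)-T_k\left(s_k+i_k\right)-\theta_k s_k-\gamma_k i_k.
\end{align*}
Substituting $s_k+i_k=1$ and invoking the bound $s_{k-1}+i_{k-1}\leq 1$ inherited from the previous age class (with $s_0+i_0=1$ for $k=1$), the right-hand side is at most $-\theta_k s_k-\gamma_k i_k\leq 0$, where non-negativity of $\theta_k$ is exactly the hypothesis of the theorem. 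Hence $s_k+i_k$ cannot exceed $1$.

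The main obstacle is to make this constraint-by-constraint argument rigorous at edges and corners of $B$ where several constraints are simultaneously active. There the tangent cone of $B$ is the intersection of the tangent cones of the individual half-spaces, so the three sign conditions established above combine to yield the full Nagumo condition at such points as well. With continuity of the vector field and closedness and convexity of $B$ in hand, Nagumo's theorem then delivers positive invariance of $B$ for \eqref{ODE}.
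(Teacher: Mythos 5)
Your proposal is correct and takes essentially the same route as the paper: both verify that on each face of the polytope $B$ the vector field does not point outward, with the same three sign checks and the same key cancellation of the bilinear infection terms when differentiating $s_k+i_k$ on the face $s_k+i_k=1$. The only difference is packaging — you invoke Nagumo's subtangentiality theorem and treat the corners explicitly via the tangent cone of the polyhedron, whereas the paper argues via the first time $t_0$ at which the trajectory reaches $\partial B$ and checks the sign of the derivative of the active constraint there; your version is, if anything, slightly more careful.
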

\begin{proof}
	Let $t_0$ be the smallest $t$ such that $x(t_0) \in \delta B$, where  $\delta B=\left\{\left(i_1,...,i_n, s_1,...,s_n : i_k = 0 \text{ or } s_k=0 \text{ or } s_k+i_k= 1\right)\right\}$.
	Assume that $s_m(t_0)=0$ ($i_m(t_0)=0$) for some $m$. By definition of $t_0$, all the other state components are such that $s_k(t_0), i_k(t_0)\geq 0$ for $k=1,...,m-1,m+1,...n$. Therefore, using equations \eqref{ODE},
	$\dfrac{ds_m\left(t_0\right)}{dt}\geq 0$ and 	$\dfrac{ds_m\left(t_0\right)}{dt}\geq 0$. On the other hand, if $i_m(t_0)+s_m(t_0)=1$ for some $m$, then $s_k(t_0), i_k(t_0)\geq 0$ and $s_k(t_0)+i_k(t_0)\leq 1$ for $k=1,...,m-1,m+1,...n$. By equations \eqref{ODE}, it follows that
	\begin{align*}
		\dfrac{d(s_m+im)(t_0)}{dt}&=T_m\left(s_{m-1}(t_0)+i_{m-1}(t_0)-1\right)\\&-\gamma_mi_m(t_0)-\theta_m(t_0)s_m(t_0)\\&\leq 0
			\end{align*}
			
	\end{proof}\\ \\
Observe that positivity is no longer guaranteed for all inputs on the system when working with the discretized model. However, assuming that $\theta_k(t)$, $k=1,...,n$ is greater than zero is not restrictive since, in order to have a physical meaning, this quantity needs to be positive.
	\subsection{Numerical Simulations}\label{NS2}
	Numerical simulations are performed to show that appropriate choices of parameters can guarantee the eradication of I-individuals. \\ In the simulations, parameters are taken from \cite{Okuwa} and described in Section \ref{NS} but are adapted to the ODE case as mentioned in Section \ref{Section_NODE}. In this case 100 classes of ages are considered. Moreover, the design control parameters are set to $r_1^k=200$ and $r_2^k=80$ for $k=1...100$. Simulations are stopped when convergence is reached with a tolerance of $10^{-8}.$ The code are performed using ODE45 function in \textsc{Matlab}.\\
	Remind that to have a physical meaning, the vaccination needs to be positive. Therefore, based on results found in numerical
	simulations, a new control law $u_{ps}
	(t)$ is designed where, for
	$k= 1,...,n,$
	\begin{align}
		\label{u_s}
		u_{ps,k}\left(t\right)=\left\{
		\begin{array}{ll}
			0, & \text{ if } u_{s,k}\left(t\right)<0,\\
			u_{s,k}\left(t\right)&\text{ otherwise } 	
		\end{array}
		\right.
	\end{align}
which is based on the control-law \eqref{switch} defined in Remark \ref{rem1}.
	Using this control law in numerical simulations shows that the stability of the system is conserved.
	\begin{figure}
		\begin{center}
			\includegraphics[height=5cm]{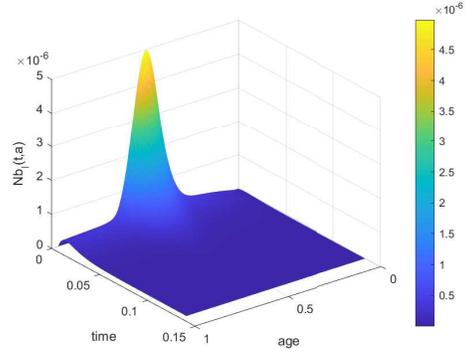}    
			\caption{Dynamics of I-individuals from NODE Model \eqref{ODE} with vaccination }  
			\label{fig_i_ac}                                 
		\end{center}
	\end{figure}
	\begin{figure} 
		\begin{center}
			\includegraphics[height=5cm]{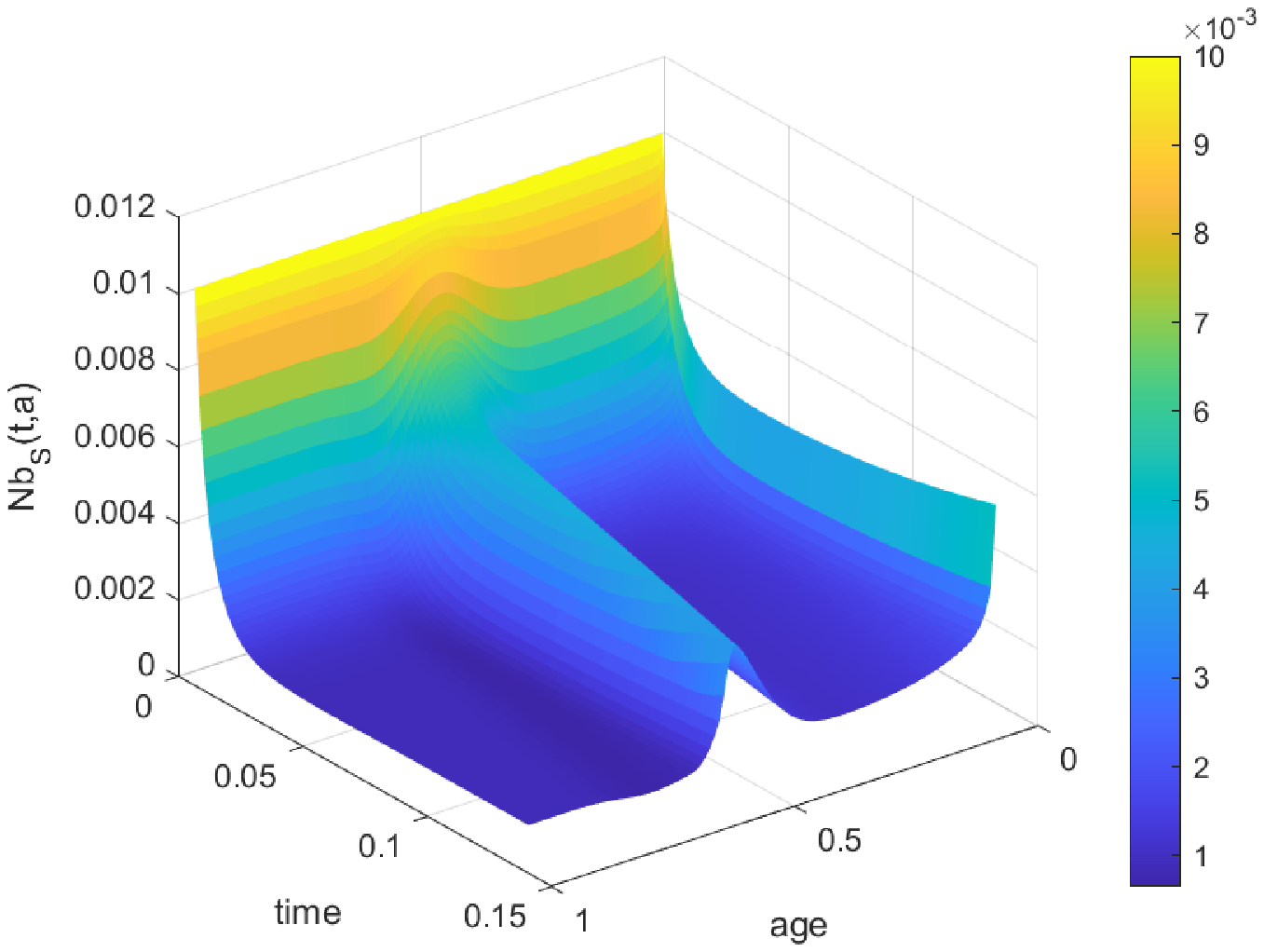}  
			\caption{Dynamics of S-individuals from NODE Model \eqref{ODE} with vaccination }  
			\label{fig_s_ac}                                 
		\end{center}                               
	\end{figure}
	\begin{figure}
		\begin{center}
			\includegraphics[height=5cm]{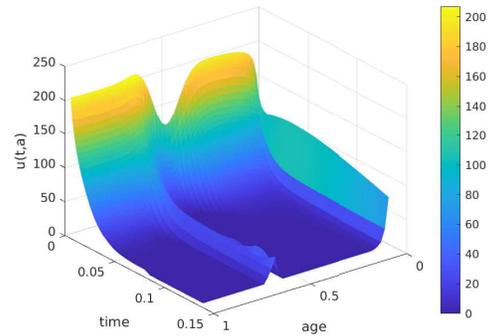}    
			\caption{Dynamics of vaccination }  
			\label{fig_vac}                          
		\end{center}                                 
	\end{figure}
	\noindent Comparing Figure \ref{fig3} from Section \ref{NS} and Figure \ref{fig_i_ac}, we can observe that the I-individuals converge to zero when the control is applied. Moreover, the I-individuals remains positive and are smaller than 1. This is also the case for the number of S-individuals observed in Figure \ref{fig_s_ac}. Moreover, Figure \ref{fig_vac} suggests to vaccinate first individuals in the classes of age where the epidemic is absent and in a second time to vaccinate individuals from classes of age around the ages where individuals were initially infected.  
	\section{Closed-loop stabilization of PIDE model}\label{positive_PDE}
	The aim of this section is to design a closed-loop stabilization law for the PIDE Model. Therefore, we extend the control law \eqref{control} found in the ODE case to the PIDE case.
	\subsection{Feedback design}
	To discover the feedback design for the Model \ref{PDE_SIR}, let define a new function \begin{equation}
		W\left(t,a\right)=\Theta\left(t,a\right)S\left(t,a\right).\label{w}
	\end{equation} By putting \begin{equation}
		w_k\left(t\right)=\dfrac{1}{N_k}\displaystyle\int_{a_{k-1}}^{a_k}W\left(t,a\right)da \label{w_k_rel}
	\end{equation} and discretizing Model \ref{PDE_SIR} with respect to the age, it can be shown that \begin{equation}
		w_k\left(t\right)=\theta_k\left(t\right)s_k\left(t\right).\label{w_k}	
	\end{equation}
	Using relation \eqref{w_k_rel} and the limit of the mean value theorem for integral and assuming that $a\in\left[a_{k-1},a_k\right[$ for all $k\in\mathbb{N}$, implies that \begin{align*}
		& W\left(t,a\right)= \lim\limits_{\Delta_k\to 0} \dfrac{N_kw_k\left(t\right)}{\Delta_k}, \text{ with }\Delta_k=a_k-a_{k-1}\\
		\Leftrightarrow & \Theta\left(t,a\right) S\left(t,a\right)= \lim\limits_{\Delta_k\to 0} \dfrac{N_k\theta_k\left(t\right)s_k\left(t\right)}{\Delta_k}\\
		\Leftrightarrow & \Theta\left(t,a\right) = \dfrac{1}{S\left(t,a\right)}\lim\limits_{\Delta_k\to 0} \dfrac{N_k\theta_k\left(t\right)s_k\left(t\right)}{\Delta_k}
	\end{align*}
	Using relations of Section \ref{Section_NODE}, definition of the state feedback in finite dimension \eqref{control} and the definition of derivative in terms of limits give the following candidate for a nonlinear state feedback control law continuous in age
	\begin{align}
		\Theta\left(t,a\right)= &  \hspace{0.1cm}\tilde{\alpha}_2\left(a\right)+\displaystyle\int_0^L \beta\left(a\right)S\left(t,a\right)da-2\mu\left(a\right)-\gamma\left(a\right)\nonumber\\& -\beta\left(a\right)\displaystyle\int_0^LI\left(t,b\right)db\nonumber\\&-\dfrac{\displaystyle\int_0^L\left(\mu\left(a\right)+\gamma\left(a\right)\right)I\left(t,a\right)da}{\displaystyle\int_0^LI\left(t,b\right)db}\nonumber\\& +\dfrac{I\left(t,a\right)}{\beta\left(a\right)S\left(t,a\right)\displaystyle\int_0^LI\left(t,b\right)db}\Big(\tilde{\alpha}_1\left(a\right)\nonumber\\&+\left(\mu\left(a\right)+\gamma\left(a\right)\right)\left(\mu\left(a\right)+\gamma\left(a\right)-\tilde{\alpha}_2\left(a\right)\right)\Big)
		\label{Theta}
	\end{align} 
	Note that in this feedback, we divide by $\displaystyle\int_0^LI\left(t,b\right)db$. However, since the eradication of the epidemic is wanted, this quantity tends to zero. Therefore, in practice we use a switch-vaccination law as it was done in Remark \ref{rem1}. The control is defined as the one in \eqref{switch}. 
	%
	\subsection{Stabilizing law}
	The aim of this section is to show that the feedback law \eqref{Theta} stabilizes the PIDE model.\\ \\
	Inspired by Isidori's theory \cite{Isidori}, the following nonlinear coordinate changes is made, 
	\begin{align}
		\bar{I}\left(t,a\right)&=I\left(t,a\right),\nonumber\\
		\bar{S}\left(t,a\right)&=-\left(\gamma\left(a\right)+\mu\left(a\right)\right)I\left(t,a\right)\label{chgmt_coord}\\&+\beta\left(a\right)S\left(t,a\right)\displaystyle\int_0^L I\left(t,b\right)db.\nonumber
	\end{align}
	In this formulation the open-loop Model \eqref{PDE_SIR} becomes
	\begin{align}
		\left(\partial_t + \partial_a\right) &\bar{I}\left(t,a\right)=\bar{S}\left(t,a\right),\nonumber\\
		\left(\partial_t + \partial_a\right) &\bar{S}\left(t,a\right)=-\left(\gamma\left(a\right)+\mu\left(a\right)\right)\bar{S}\left(t,a\right)\nonumber\\&-\partial_a\left(\gamma\left(a\right)+\mu\left(a\right)\right)\bar{I}\left(t,a\right)\nonumber\\&+\left[\bar{S}\left(t,a\right)+\left(\gamma\left(a\right)+\mu\left(a\right)\right)\bar{I}\left(t,a\right)\right]\big[-\Theta\left(t,a\right)\nonumber\\&-\mu\left(a\right)-\beta\left(a\right)\displaystyle\int_0^L\bar{I}\left(t,b\right)db+\dfrac{\displaystyle\int_0^L\bar{S}\left(t,b\right)db}{\displaystyle\int_0^L\bar{I}\left(t,b\right)db}\nonumber\\&\left.+\dfrac{\partial_a\beta\left(a\right)}{\beta\left(a\right)}\right]\label{open_loop}
	\end{align}
	under non-homogeneous boundary conditions
	\begin{align}
		\bar{I}\left(t,0\right)&=0,\nonumber\\
		\bar{S}\left(t,0\right)&= \beta\left(0\right)B\displaystyle\int_0^L \bar{I}\left(t,b\right)db\label{NHBC}
	\end{align}
	and initial conditions 
	\begin{align}
		\bar{I}\left(0,a\right)&=I_0\left(a\right),\nonumber\\
		\bar{S}\left(0,a\right)&=\bar{S}_0\left(a\right).\label{IC}
	\end{align} Moreover, the vaccination law \eqref{Theta} rewrites 
	\begin{align*}
		\Theta\left(t,a\right)&=\tilde{\alpha}_2\left(a\right)+\dfrac{\displaystyle\int_0^L\bar{S}\left(t,b\right)db}{\displaystyle\int_0^L\bar{I}\left(t,b\right)db}-2\mu\left(a\right)-\gamma\left(a\right)\nonumber\\&-\beta\left(a\right)\displaystyle\int_0^L\bar{I}\left(t,b\right)db\nonumber\\&+\dfrac{\bar{I}\left(t,a\right)}{\bar{S}\left(t,a\right)+\left(\gamma\left(a\right)+\mu\left(a\right)\right)\bar{I}\left(t,a\right)}\big(\tilde{\alpha}_1\left(a\right)\nonumber\\&+\left(\mu\left(a\right)+\gamma\left(a\right)\right)\left(\mu\left(a\right)+\gamma\left(a\right)-\tilde{\alpha}_2\left(a\right)\right)\big)
	\end{align*}
	Therefore, the closed-loop system is given by
	\begin{align}
		\left(\partial_t + \partial_a\right) \bar{I}\left(t,a\right)&=\bar{S}\left(t,a\right),\nonumber\\
		\left(\partial_t + \partial_a\right) \bar{S}\left(t,a\right)&=\bar{I}\left(t,a\right)\left[-\tilde{\alpha}_1\left(a\right)+g\left(a\right)\right]\nonumber\\&\phantom{=}+\bar{S}\left(t,a\right)\left[-\tilde{\alpha}_2\left(a\right)+h\left(a\right)\right]\label{CLNV}
	\end{align}
	where we note 
	\begin{align}
		g\left(a\right)&=-\beta\left(a\right)\dfrac{d}{da}\left(\dfrac{\gamma\left(a\right)+\mu\left(a\right)}{\beta\left(a\right)}\right),\label{ka}\\
		h\left(a\right)&=\dfrac{1}{\beta\left(a\right)}\dfrac{d}{da}\beta\left(a\right)\label{ha}
	\end{align} with boundary conditions \eqref{NHBC} and initial conditions \eqref{IC}.
	The design parameters are denoted by $\tilde{\alpha}_1\left(a\right)$ and $\tilde{\alpha}_2\left(a\right)$. They can be chosen appropriately to have the stability and positivity of the system. Others parameters ($\beta, \mu, \gamma$) are given for a chosen model. \\
	Similarly to the results of Isidori's theory for finite-dimensional system \cite{Isidori}, we find a feedback that linearizes the open-loop system \eqref{open_loop} thanks to the appropriate coordinates change \eqref{chgmt_coord}.\\ \\ It remains to show that the feedback \eqref{Theta} also stabilizes system \eqref{PDE_SIR}.
	In the following it is shown that the closed-loop system \eqref{CLNV} is stable which implies the asymptotic convergence to zero of the infected population.
	Hereafter we denote $$G\left(a\right)=-\tilde{\alpha}_1\left(a\right)+g\left(a\right) \text{ and } H\left(a\right)=-\tilde{\alpha}_2\left(a\right)+h\left(a\right). $$ With those notations, the state-space formulation of system \eqref{CLNV} is given by 
	\begin{align}
		\dot{\bar{x}}&=\bar{\mathcal{A}}\bar{x}\nonumber\\
		\bar{x}\left(0\right)&=\bar{x}_{B0}\label{SSF}
	\end{align}
	with $\bar{x}=\left(\bar{I}, \bar{S}\right)^T,$ $\bar{\mathcal{A}}=\begin{pmatrix}
		-\dfrac{d\cdot}{da} & I\\
		G\left(a\right)I & \phantom{iii}-\dfrac{d\cdot}{da}+H\left(a\right)I
	\end{pmatrix}$ with $\mathcal{D}\left(\bar{\mathcal{A}}\right)=\left\{\bar{x}\in L^1\left(0,L\right)\times\left(0,L\right),\bar{x}, \dfrac{d\bar{x}}{da} AC\left[0,L\right],\right.\\\bar{x}\left(0\right)=\bar{x}_{B0}\Big\}$  and $\bar{x}_{B0}=\begin{pmatrix}
		0\\ \beta\left(0\right) B\displaystyle\int_0^L\bar{I}\left(t,b\right)db
	\end{pmatrix}.$\\ \\
	Using Fattorini's approach \cite{fattorini_1968} on boundary control systems and generalizing the results in \cite[Chap. 10]{tucsnak_weiss_2009} to Banach spaces (see Appendix \ref{BC_L1}), system \eqref{SSF} can be rewritten as follows:
	\begin{align}
		\dot{\bar{x}}&=\bar{\mathcal{A}}_0\bar{x}+\tilde{B}u\nonumber\\
		\bar{x}\left(0\right)&=\left(0, 0\right)^T\label{SSF_BC}
	\end{align}
	where $\bar{\mathcal{A}}_0=\bar{\mathcal{A}}$ with $\mathcal{D}\left(\bar{\mathcal{A}}_0\right)=\left\{\bar{x}\in L^1\left(0,L\right)\times L^1\left(0,L\right):\right.\\\left.\bar{x}, \dfrac{d\bar{x}}{da} AC\left[0,L\right], \bar{x}_0=\left(0,0\right)^T \right\}.$ Moreover  $\tilde{B}=\begin{pmatrix}
		0\\\delta_0  
	\end{pmatrix}$ and $u=\beta\left(0\right)
	B\displaystyle\int_0^L\bar{I}\left(t,b\right)db.$ \\Equivalently we have
	\begin{align}
		\dot{\bar{x}}&=\left(\bar{\mathcal{A}}_0+\bar{D}\right)\bar{x}\nonumber\\
		\bar{x}\left(0\right)&=\left(0, 0\right)^T\label{SSF_BC_2}
	\end{align}
	where $\bar{D}=\begin{pmatrix}
		0 & 0\\\delta_0\beta\left(0\right)B\displaystyle\int_0^L\cdot \hspace{0.2cm} db &0  
	\end{pmatrix}$. \\ \\
	So that $\bar{D}$ is bounded, we use an approximation of the Dirac delta $\delta_0$. Remark that this approximation allow us to deal with a more realistic model since there is no sense to vaccinate instantaneously at birth. Let define $d_k\left(a\right)$ a term of a Dirac sequence which satisfies the properties developed in \cite[Chap. 2, Sect. 3, Lemma 2.3.4]{pritchard_2010} with $\infty$ replaced with $L$.
	Therefore, \eqref{SSF_BC} becomes 
	\begin{align}
		\dot{\bar{x}}_k&=\left(\bar{\mathcal{A}}_0+\bar{D}_k\right)\bar{x}_k\nonumber\\
		\bar{x}_k\left(0\right)&=\left(0, 0\right)^T\label{statespace}
	\end{align}
	with $\bar{D}_k=\begin{pmatrix}
		0 & 0\\ d_k\left(a\right)\beta\left(0\right)B\displaystyle\int_0^L\cdot \hspace{0.2cm} db\hspace{0.2cm} & 0
	\end{pmatrix}$.\\
	The choice of the term of the Dirac sequence does not have any impact in the bound of $B$ since its integral equals one.\\ Therefore, in the following, an approximation of model \eqref{SSF_BC_2}, where the unbounded operator $D$ is replaced by the bounded operator $\bar{D}_k$, is used in order to perform the stability and analysis.
	\begin{lemma}\label{GI}
		$\bar{\mathcal{A}}_0$ is the infinitesimal generator of a $C_0-$semigroup $\bar{U}\left(t\right)$.
	\end{lemma}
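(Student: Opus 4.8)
The plan is to exhibit $\bar{\mathcal{A}}_0$ as a bounded perturbation of a transport generator and then invoke the bounded perturbation theorem for $C_0$-semigroups \cite[Chap.~3, Thm~1.1]{Pazy}. First I would split the operator as $\bar{\mathcal{A}}_0 = \mathcal{A}_d + \mathcal{B}$, where $\mathcal{A}_d = \mathrm{diag}\big(-\tfrac{d\cdot}{da},\, -\tfrac{d\cdot}{da}\big)$ carries the differential (transport) part on the domain $\mathcal{D}(\mathcal{A}_d) := \mathcal{D}(\bar{\mathcal{A}}_0)$ with the homogeneous condition $\bar{x}(0) = (0,0)^T$, and $\mathcal{B}$ collects the zeroth-order coupling,
\begin{equation*}
\mathcal{B} = \begin{pmatrix} 0 & I \\ G(a)I & H(a)I \end{pmatrix}.
\end{equation*}
Since $\mathcal{B}$ is a multiplication/coupling operator it will turn out to be everywhere defined and bounded on $X = L^1(0,L)\times L^1(0,L)$, so the whole difficulty is concentrated in the two verifications below.

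Second, I would check that $\mathcal{A}_d$ generates a $C_0$-semigroup on $X$. The method of characteristics gives the explicit shift semigroup acting component-wise by $(\bar{U}_d(t)\bar{x})(a) = \bar{x}(a-t)$ for $a>t$ and $(\bar{U}_d(t)\bar{x})(a) = 0$ for $0\le a \le t$; this is strongly continuous, contractive in the $L^1$-norm, and in fact nilpotent ($\bar{U}_d(t)=0$ for $t>L$). Equivalently, $\mathcal{A}_d$ is dissipative with $(0,\infty)\subset\rho(\mathcal{A}_d)$, so Lumer--Phillips applies. Next I would verify that $\mathcal{B}$ is bounded by estimating, for $\bar{x}=(x_1,x_2)^T$,
\begin{equation*}
\|\mathcal{B}\bar{x}\| = \|x_2\|_1 + \|G x_1 + H x_2\|_1 \le (1+\|H\|_\infty)\|x_2\|_1 + \|G\|_\infty\|x_1\|_1,
\end{equation*}
so that $\mathcal{B}$ is bounded as soon as $G,H\in L^\infty(0,L)$.

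The main obstacle is precisely establishing $G,H\in L^\infty(0,L)$, i.e. the boundedness of $g$ and $h$ from \eqref{ka}--\eqref{ha}. Here $h=\beta'/\beta$ is bounded provided $\beta$ is $C^1$ and bounded away from zero on $[0,L]$, while $g=-\beta\,\frac{d}{da}\big(\tfrac{\gamma+\mu}{\beta}\big)$ is bounded provided $\tfrac{\gamma+\mu}{\beta}\in C^1[0,L]$ and $\beta\in L^\infty$; these are exactly the regularity requirements imposed in Section~\ref{NS} (and satisfied by the coefficients chosen there, with $\beta$ bounded below by $\beta_0/100>0$). Since the design functions $\tilde{\alpha}_1,\tilde{\alpha}_2$ are chosen bounded, $G=-\tilde{\alpha}_1+g$ and $H=-\tilde{\alpha}_2+h$ lie in $L^\infty(0,L)$ and $\mathcal{B}$ is bounded. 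With $\mathcal{A}_d$ a generator and $\mathcal{B}\in\mathcal{L}(X)$, the bounded perturbation theorem then yields that $\bar{\mathcal{A}}_0=\mathcal{A}_d+\mathcal{B}$ generates a $C_0$-semigroup $\bar{U}(t)$ on $X$, which is the claim.
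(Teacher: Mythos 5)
Your argument is correct, but it proves the lemma by a genuinely different route than the paper. You split $\bar{\mathcal{A}}_0=\mathcal{A}_d+\mathcal{B}$ into the diagonal transport generator (the nilpotent right-shift semigroup on $L^1(0,L)\times L^1(0,L)$) plus the zeroth-order coupling $\mathcal{B}$, check $\mathcal{B}\in\mathcal{L}(X)$ via $G,H\in L^\infty(0,L)$, and conclude by the bounded perturbation theorem. The paper instead conjugates $\bar{\mathcal{A}}_0$ by $J=\left(\begin{smallmatrix} I & 0\\ \kappa(a)I & I\end{smallmatrix}\right)$, where $\kappa$ is a bounded $C^1$ solution of the scalar Riccati equation $\kappa'+\kappa^2-H\kappa+G=0$ (existence taken from \cite{aksikas_winkin_dochain_2007}); the conjugated operator $\tilde{\mathcal{A}}$ is upper triangular, generation follows from \cite[Lemma 5.3.2]{curtain_zwart_2020}, and similarity invariance of generation from \cite{schumacher_1981} transfers the conclusion back to $\bar{\mathcal{A}}_0$. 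For the generation statement alone your route is shorter and needs weaker hypotheses (only $G,H\in L^\infty$, no Riccati solvability). What the paper's heavier machinery buys is used immediately afterwards: the triangular form gives explicit formulas for $\tilde{U}_1,\tilde{U}_2,\tilde{U}_{12}$ by characteristics and hence the quantitative growth bound $\tilde{\omega}_0(\tilde U)<-(c_2+K)$ of Lemma \ref{stability}, which drives Lemma \ref{stab} and Theorem \ref{stab_inf_PDE}. Your decomposition cannot replace that step, since the perturbation estimate it yields, $\Vert e^{(\mathcal{A}_d+\mathcal{B})t}\Vert\leq e^{\Vert\mathcal{B}\Vert t}$, is a positive growth bound and says nothing about stability. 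Note also that both arguments implicitly require the coefficients $g,h$ of \eqref{ka}--\eqref{ha} to be bounded on $[0,L]$ (you state this explicitly; the paper hides it in the assumption that $\kappa$ is bounded), which is a nontrivial requirement at $a=L$ for the specific $\mu$ used in the simulations.
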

	\begin{proof}
	As in \cite{aksikas_winkin_dochain_2007} we perform a similarity transformation to have an equivalent state space description as \eqref{statespace} with triangular infinitesimal generator. Therefore, $J=\begin{pmatrix}
		I & 0\\ \kappa\left(a\right)I & I
	\end{pmatrix}$ is chosen where $\kappa\left(a\right)$ is a bounded $C_1$ solution (i.e $\exists$ $K>0$ such that $\kappa\left(a\right))\leq K$ $\forall$ $a\in\left[0,L\right]$) of the equation $$\dfrac{d\kappa}{da}+\kappa^2\left(a\right)-H\left(a\right)\kappa\left(a\right)+G\left(a\right)=0.$$ Note that the existence of such solution is established in \cite{aksikas_winkin_dochain_2007}. Applying this transformation to the operator $\bar{\mathcal{A}}_0$, i.e $J^{-1}\bar{\mathcal{A}}_0J$, gives $$\tilde{\mathcal{A}}=\begin{pmatrix}
		-\dfrac{d\cdot}{da}+G\left(a\right)\kappa\left(a\right)I & I\\ 0 & -\dfrac{d\cdot}{da}+\left(H\left(a\right)-\kappa\left(a\right)\right)I
	\end{pmatrix} $$ where $\mathcal{D}\left(\tilde{\mathcal{A}}\right)=\mathcal{D}\left(\bar{\mathcal{A}}_0\right)$. By a result developed in \cite[Chap. 5, Sect. 3, Lemma 5.3.2]{curtain_zwart_2020} $\tilde{\mathcal{A}}$ is the infinitesimal generator of a $C_0-$semigroup \begin{align*}
		\left(\tilde{U}\left(t\right)\right)_{t\geq 0}=\begin{pmatrix}
			\tilde{U}_1\left(t\right) & \tilde{U}_{12}\left(t\right)\\
			0 & \tilde{U}_2\left(t\right)
		\end{pmatrix}
	\end{align*}
	where 
	\begin{align}
		\left(\tilde{U}_{12}\left(t\right)x\right)\left(a\right)=\displaystyle\int_0^t\tilde{U}_1\left(t-s\right)\tilde{\mathcal{A}}_{12}\tilde{U}_2\left(s\right)x\left(a\right)ds.\label{SG}
	\end{align}
	Using Lemma 4.5 in \cite{schumacher_1981}, we conclude that $\bar{\mathcal{A}}_0$ is the infinitesimal generator of a $C_0-$semigroup $$\left(\bar{U}\left(t\right)\right)_{t\geq 0}=\left(J\tilde{U}\left(t\right)J^{-1}\right)_{t\geq 0}.$$
		\end{proof}
	\begin{thm}\textbf{Semigroup generation}\\
		$\bar{\mathcal{A}}_0+\bar{D}_k$ is the infinitesimal generator of a $C_0-$semigroup $\left(\bar{T}\left(t\right)\right)_{t\geq 0}$.
	\end{thm}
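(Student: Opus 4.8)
The plan is to recognize the operator $\bar{\mathcal{A}}_0+\bar{D}_k$ as a \emph{bounded perturbation} of the generator already produced in Lemma \ref{GI}. By that lemma, $\bar{\mathcal{A}}_0$ generates a $C_0$-semigroup $\left(\bar{U}\left(t\right)\right)_{t\geq 0}$ on the Banach space $X=L^1\left(0,L\right)\times L^1\left(0,L\right)$. It therefore suffices to verify that $\bar{D}_k$ is a bounded linear operator on $X$ and then to invoke the bounded perturbation theorem, used in its Banach-space form so as to remain consistent with the $L^1$ setting and the generalization to Banach spaces already adopted in the paper.

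First I would establish the boundedness of $\bar{D}_k$. For $\bar{x}=\left(\bar{I},\bar{S}\right)^T\in X$, the operator $\bar{D}_k$ returns a vector whose first component vanishes and whose second component equals $d_k\left(\cdot\right)\beta\left(0\right)B\displaystyle\int_0^L \bar{I}\left(b\right)db$. Since the norm on $X$ is the sum of the two $L^1$-norms, one computes
\[
\Vert \bar{D}_k\bar{x}\Vert_X=\beta\left(0\right)B\,\Big\vert\int_0^L \bar{I}\left(b\right)db\Big\vert\;\Vert d_k\Vert_1 .
\]
Because $d_k$ is a term of a Dirac sequence, its integral equals one, whence $\Vert d_k\Vert_1=1$; moreover $\big\vert\int_0^L \bar{I}\left(b\right)db\big\vert\leq \Vert \bar{I}\Vert_1\leq\Vert\bar{x}\Vert_X$. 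Consequently $\Vert \bar{D}_k\bar{x}\Vert_X\leq \beta\left(0\right)B\,\Vert\bar{x}\Vert_X$, so that $\bar{D}_k$ is bounded with $\Vert \bar{D}_k\Vert\leq \beta\left(0\right)B$; linearity is immediate since the integral depends linearly on $\bar{I}$. This is exactly the payoff of the Dirac-sequence regularization performed before the statement.

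Then I would apply the bounded perturbation theorem (for instance \cite[Chap. 3, Sect. 1, Thm. 1.1]{Pazy}, or the corresponding result in \cite{curtain_zwart_2020}): if $\bar{\mathcal{A}}_0$ generates a $C_0$-semigroup and $\bar{D}_k$ is a bounded linear operator, then $\bar{\mathcal{A}}_0+\bar{D}_k$, with domain $\mathcal{D}\left(\bar{\mathcal{A}}_0+\bar{D}_k\right)=\mathcal{D}\left(\bar{\mathcal{A}}_0\right)$, generates a $C_0$-semigroup $\left(\bar{T}\left(t\right)\right)_{t\geq 0}$, which is the desired conclusion.

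The argument is short precisely because the approximation of $\delta_0$ by $d_k$ was designed to render $\bar{D}_k$ bounded; the only point requiring care is the norm estimate above. By contrast, the unregularized operator $\bar{D}$ built from the genuine Dirac delta is unbounded, so the same perturbation theorem does not apply to it directly and a substantially more delicate boundary-control argument would be needed; I expect this to be the main conceptual obstacle that the regularization step is introduced to circumvent.
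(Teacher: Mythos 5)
Your proposal is correct and follows essentially the same route as the paper: establish that $\bar{D}_k$ is a bounded linear operator on $L^1\left(0,L\right)\times L^1\left(0,L\right)$ with $\Vert \bar{D}_k\Vert\leq \beta\left(0\right)B$, recall from Lemma \ref{GI} that $\bar{\mathcal{A}}_0$ generates a $C_0$-semigroup, and conclude by the bounded perturbation theorem. The only difference is cosmetic (you spell out the norm estimate and cite Pazy/Curtain--Zwart where the paper cites Engel--Nagel).
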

\begin{proof}
	The linear operator $\bar{D}_k$ is bounded on $L^1\left(0,L\right)\times L^1\left(0,L\right)$ with $\Vert \bar{D}_k \Vert:=\Vert \bar{D}_k \Vert_{\mathcal{L}\left(L^1\times L^1\right)}\leq \beta\left(0\right)B$). Moreover, by Lemma \ref{GI}, $\bar{\mathcal{A}}_0$ is the infinitesimal generator of a $C_0-$semigroup. Therefore, we may apply the bounded perturbation theorem developed in \cite[Chap. 3, Sect. 1]{Engel_Nagel} which concludes the proof. \end{proof}
	\begin{lemma}\label{stability}
		$\tilde{\mathcal{A}}$ is the infinitesimal generator of an exponentially stable $C_0-$semigroug $(\tilde{U}(t))_{t\geq 0}$ with growth constant $$\tilde{\omega}_0(\tilde{U})<-(c_2+K)<0 $$provided that the gain functions $\tilde{\alpha}_1(a)$ and $\tilde{\alpha}_2(a)$ be chosen such that \begin{align}
			\tilde{\alpha}_1\left(a\right) &\text{ such that } g\left(a\right)+c_1>\tilde{\alpha}_1\left(a\right)\label{rel_1}\\
			\tilde{\alpha}_2\left(a\right) &\text{ such that } h\left(a\right)+c_2>\tilde{\alpha}_2\left(a\right)\label{rel_2}
		\end{align} 
		for all $a\in\left[0,L\right]$ with $0\leq c_2\leq K\left(c_1-1\right)$ and $1\leq c_1$.
	\end{lemma}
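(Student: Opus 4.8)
The plan is to exploit the upper-triangular structure of $\tilde{\mathcal{A}}$ obtained in the proof of Lemma \ref{GI}. Since the off-diagonal entry $\tilde{\mathcal{A}}_{12}=I$ is bounded and the generated semigroup $(\tilde{U}(t))_{t\geq 0}$ has the triangular form with coupling term given by \eqref{SG}, the growth bound of $(\tilde{U}(t))_{t\geq 0}$ equals the maximum of the growth bounds of the two diagonal semigroups $(\tilde{U}_1(t))_{t\geq 0}$ and $(\tilde{U}_2(t))_{t\geq 0}$ generated by $\tilde{\mathcal{A}}_{11}=-\frac{d\cdot}{da}+G(a)\kappa(a)I$ and $\tilde{\mathcal{A}}_{22}=-\frac{d\cdot}{da}+(H(a)-\kappa(a))I$. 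Indeed, bounding \eqref{SG} by a convolution of two exponentials shows that $\tilde{U}_{12}(t)$ contributes at most a factor polynomial in $t$, which leaves the exponential rate unchanged; hence $\tilde{\omega}_0(\tilde{U})\leq\max\{\tilde{\omega}_0(\tilde{U}_1),\tilde{\omega}_0(\tilde{U}_2)\}$, and a strict bound on each diagonal rate is preserved for the full semigroup.

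First I would estimate each diagonal growth bound by a dissipativity argument in $L^1(0,L)$, of Lumer--Phillips type. For a generic multiplication coefficient $m\in L^{\infty}(0,L)$, consider $-\frac{d\cdot}{da}+m(a)I$ on the domain enforcing the homogeneous boundary condition $x(0)=0$. Pairing with the $L^1$ duality map (the sign of $x$ scaled by $\Vert x\Vert_1$) and integrating by parts, the transport part yields $-|x(L)|\,\Vert x\Vert_1\leq 0$, while the multiplication part is bounded by $(\sup_a m(a))\Vert x\Vert_1^2$. Consequently $\tilde{\mathcal{A}}_{ii}-(\sup_a m_i)I$ is dissipative, so $\Vert\tilde{U}_i(t)\Vert\leq e^{t\,\sup_a m_i(a)}$ and $\tilde{\omega}_0(\tilde{U}_i)\leq\sup_a m_i(a)$ for $i=1,2$, with $m_1=G\kappa$ and $m_2=H-\kappa$.

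It then remains to show that both suprema lie strictly below $-(c_2+K)$. This is where the gain conditions \eqref{rel_1}--\eqref{rel_2}, the bound $\kappa(a)\leq K$, and the Riccati identity $\kappa'+\kappa^2-H\kappa+G=0$ satisfied by $\kappa$ (from the proof of Lemma \ref{GI}) enter together. The conditions \eqref{rel_1}--\eqref{rel_2} control $G$ and $H$ pointwise, the Riccati identity allows the coefficient $G\kappa$ of the first block to be re-expressed through $\kappa'$, $\kappa^2$ and $H\kappa$, and the bound $\kappa\leq K$ converts these into pointwise estimates in terms of $c_1$, $c_2$ and $K$. The constraints $1\leq c_1$ and $0\leq c_2\leq K(c_1-1)$ are precisely what is needed to make the resulting bounds on the two blocks compatible and to absorb the cross-term $\kappa K$, yielding $\max\{\sup_a G\kappa,\ \sup_a(H-\kappa)\}<-(c_2+K)$.

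The main obstacle is this last step: the careful bookkeeping of signs and the use of the Riccati identity to turn the one-sided control of $G$ and $H$ provided by \eqref{rel_1}--\eqref{rel_2} into a \emph{uniform} negative upper bound $-(c_2+K)$ on both multiplication coefficients. Once this pointwise estimate is in hand, the dissipativity bound and the triangular-coupling argument give $\tilde{\omega}_0(\tilde{U})<-(c_2+K)<0$, which is the claim, and exponential stability of $(\tilde{U}(t))_{t\geq 0}$ follows at once.
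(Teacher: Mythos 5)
Your overall architecture is the same as the paper's: exploit the triangular form of $\tilde{\mathcal{A}}$ produced by the similarity transformation of Lemma \ref{GI}, bound the two diagonal semigroups, and control the coupling term \eqref{SG}. Where you differ is in technique: the paper solves the two scalar transport equations explicitly by characteristics, obtaining $(\tilde{U}_1(t)x_{01})(a)=x_{01}(a-t)E_{G\kappa}(a-t,a)$ for $t\le a$ and $0$ for $t>a$ (so the diagonal semigroups are in fact nilpotent on $[0,L]$), computes $\tilde{U}_{12}$ from \eqref{SG} in closed form, and then extracts an explicit pair $(M_c,c)$ with $c=c_2+K$ and $M_c=1+K(c_1-1)-c_2$ by direct integral estimates. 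Your Lumer--Phillips estimate in $L^1$ is a legitimate substitute for the diagonal bounds, and your convolution remark for the coupling is sound. Note, however, that the explicit constant $M_c$ is not cosmetic: it reappears in Lemma \ref{stab} in the perturbed growth bound $-c+M_c\Vert\bar{D}_k\Vert$, so a proof that delivers only the decay rate and not the multiplicative constant would not suffice for the sequel.

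The genuine gap is the step you yourself flag as ``the main obstacle'' and then do not carry out: deriving the uniform pointwise bounds $\sup_a G(a)\kappa(a)<-(c_2+K)$ and $\sup_a\bigl(H(a)-\kappa(a)\bigr)<-(c_2+K)$ from \eqref{rel_1}--\eqref{rel_2}. As written, \eqref{rel_1} gives $\tilde{\alpha}_1(a)-g(a)<c_1$, i.e. $G(a)=g(a)-\tilde{\alpha}_1(a)>-c_1$, and \eqref{rel_2} similarly gives $H(a)>-c_2$: these are \emph{lower} bounds on $G$ and $H$, and together with $\kappa(a)\le K$ they do not by themselves produce a negative \emph{upper} bound on either $G\kappa$ or $H-\kappa$, let alone one below $-(c_2+K)$. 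The paper's own proof uses the hypotheses in the form $E_{G\kappa}(\eta,\eta+t)\le e^{-Kc_1t}$ and $E_{H+\kappa}(\eta,\eta+t)\le e^{-(K+c_2)t}$, which requires bounds of the opposite orientation together with specific sign and magnitude information on $\kappa$; your sketch (re-express $G\kappa$ through the Riccati identity and absorb the cross-term $\kappa K$) gestures at this but never resolves the sign bookkeeping, which is exactly where $1\le c_1$ and $0\le c_2\le K(c_1-1)$ must enter. Until that pointwise estimate is actually established, the conclusion $\tilde{\omega}_0(\tilde{U})<-(c_2+K)$ is asserted rather than proved.
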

	\begin{proof}
	By the method of characteristics, we show that \begin{align*}
		\left(\tilde{U}_1\left(t\right)x_{01}\right)\left(a\right)&=\left\{
		\begin{array}{ll}
			x_{01}\left(a-t\right)E_{G\kappa}\left(a-t,a\right), & \text{ if } t\leq a, \\
			0 &  \text{ if } t > a,
		\end{array}
		\right.\\
		\left(\tilde{U}_2\left(t\right)x_{02}\right)\left(a\right)&=\left\{
		\begin{array}{ll}
			x_{02}\left(a-t\right)E_{H-\kappa}\left(a-t,a\right), & \text{ if } t\leq a, \\
			0 &  \text{ if } t > a
		\end{array}
		\right.
	\end{align*}
	with $E_f\left(x,y\right)=\exp\left(\displaystyle\int_x^yf\left(\eta\right)d\eta\right).$
	Moreover, using relation \eqref{SG}, we find that
	\begin{align*}
		\left(\tilde{U}_{12}\left(t\right)x_{0}\right)\left(a\right)&=\left\{
		\begin{array}{ll}
			x_{0}\left(a-t\right)\displaystyle\int_0^t E_{H-\kappa}\left(a-t,\right.&\left.\hspace{-0.2cm}a-t+s\right)\\\hspace{1cm}E_{G\kappa}\left(a-t,a-s\right)ds & \text{ if } t\leq a, \\
			0 &  \text{ if } t > a,
		\end{array}
		\right.
	\end{align*}
	Therefore, in view of the trajectories, $\tilde{\mathcal{A}}$ is the infinitesimal generator of a stable $C_0-$semigroup with growth bound $\omega_0$ equals $-\infty$.
	Let $x_0=\left(x_{01}, x_{02}\right)^T$. Since $\tilde{\mathcal{A}}$ is stable, we know that $\exists M_c>0$ and $c>0$ such that 
	$$
	\Vert \tilde{U}\left(t\right)x_0\Vert\leq M_ce^{-ct}\Vert x_0\Vert.$$ In the following part of the proof, we identify $c$ and $M_c$. \begin{align*}
		\Vert \tilde{U}\left(t\right)x_0\Vert&=\displaystyle\int_0^L \vert \left(\tilde{U}_1\left(t\right)x_{01}\right)\left(a\right)+\left(\tilde{U}_{12}\left(t\right)x_{02}\right)\left(a\right)\vert da\\&+\displaystyle\int_0^L\vert \left(\tilde{U}_2\left(t\right)x_{02}\right)\left(a\right)\vert da\\
		&\leq \displaystyle\int_0^L \vert x_{01}\left(\eta\right)\vert E_{G\kappa}\left(\eta,\eta+t\right) d\eta \\ &+\displaystyle\int_0^L \vert x_{02}\left(\eta\right)\vert \Big(E_{H+\kappa}\left(\eta,\eta+t\right)\\&\left.+\displaystyle\int_0^t E_{H+\kappa}\left(\eta,\eta+s\right)E_{G\kappa}\left(\eta,\eta+t-s\right)ds\right)  d\eta
	\end{align*}
	Using relations \eqref{rel_1} and \eqref{rel_2}, we can show that 
	$$E_{G\kappa}\left(\eta,\eta+t\right)\leq e^{-K c_1 t} \text{ and } E_{H+\kappa}\left(\eta,\eta+t\right)\leq e^{-\left(K +c_2\right)t}.$$
	Thus,
	\begin{align*}
		\Vert \tilde{U}\left(t\right)x_0\Vert\leq& e^{-K c_1 t}\Vert x_{01}\Vert_1\\&+\left(\dfrac{e^{-\left(K +c_2\right)t}-e^{-K c_1 t}}{K c_1-\left(K +c_2\right)}+e^{-\left(K +c_2\right)t}\right)\Vert x_{02}\Vert_1\\
		\leq & \left(1+K\left(c_1-1\right)-c_2\right)e^{-\left(c_2+K\right)t}\Vert x_0 \Vert.
	\end{align*}	\end{proof}
	\begin{lemma}\label{stab}\textbf{Stability of $\bar{\mathcal{A}}_0+\bar{D}_k$}\\
		$\bar{\mathcal{A}}_0+\bar{D}_k$ is the infinitesimal generator of an exponentially stable $C_0-$semigroup  $\left(\tilde{T}\left(t\right)\right)_{t\geq 0}$ with growth bound $$\omega_0(\tilde{T})<-\left(c_2+K\right)+\left(1+K\left(c_1-1\right)-c_2\right)\Vert\bar{D}_k\Vert<0$$ if $c_1$ and $c_2$ are chosen such that \begin{align}
			c_1&>max\left\{1,\underset{a\in\left[0,L\right]}{sup}\left(\tilde{\alpha}_1\left(a\right)-g\left(a\right)\right),\dfrac{\beta_0B}{K}\right\},\label{rel_4}\\
			c_2&>max\left\{0,\dfrac{\beta_0B\left(1+K c_1\right)}{1+\beta_0B}-K,\underset{a\in\left[0,L\right]}{sup}\left(\tilde{\alpha}_2\left(a\right)-h\left(a\right)\right)\right\},\label{rel_5}\\
			c_2&\leq K\left(c_1-1\right).\label{rel_6}
		\end{align}
	\end{lemma}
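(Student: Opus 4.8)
The plan is to derive the growth bound by transporting the explicit exponential estimate for $\tilde{\mathcal{A}}$ obtained in Lemma~\ref{stability} to $\bar{\mathcal{A}}_0+\bar{D}_k$ through the similarity transformation $J$ of Lemma~\ref{GI}, and then invoking the bounded perturbation theorem. Generation of a $C_0$-semigroup $(\tilde{T}(t))_{t\geq 0}$ is already granted by the preceding theorem, so only the quantitative estimate and its negativity remain to be established.

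First I would exploit $\bar{\mathcal{A}}_0=J\tilde{\mathcal{A}}J^{-1}$ to write
\begin{equation*}
J^{-1}\left(\bar{\mathcal{A}}_0+\bar{D}_k\right)J=\tilde{\mathcal{A}}+J^{-1}\bar{D}_kJ,
\end{equation*}
so that the semigroup generated by $\bar{\mathcal{A}}_0+\bar{D}_k$ is similar to the one generated by $\tilde{\mathcal{A}}+J^{-1}\bar{D}_kJ$. The crucial point is that, because $\bar{D}_k$ reads off only the first component of a state and injects the result into the second component, a direct computation with the explicit $J$ of Lemma~\ref{GI} yields $J^{-1}\bar{D}_kJ=\bar{D}_k$. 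The perturbation is therefore left invariant by the change of variables, and in particular $\Vert J^{-1}\bar{D}_kJ\Vert=\Vert\bar{D}_k\Vert\leq\beta_0B$; this is exactly what lets the constant $M$ of Lemma~\ref{stability} survive undistorted by any factor $\Vert J\Vert\,\Vert J^{-1}\Vert$.

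Next I would apply the bounded perturbation theorem \cite[Chap.~3, Sect.~1]{Engel_Nagel} to $\tilde{\mathcal{A}}$, which by Lemma~\ref{stability} generates $(\tilde{U}(t))_{t\geq 0}$ with $\Vert\tilde{U}(t)\Vert\leq Me^{-(c_2+K)t}$ and $M=1+K(c_1-1)-c_2$. The theorem then shows that $\tilde{\mathcal{A}}+\bar{D}_k$ generates a $C_0$-semigroup whose growth bound is at most $-(c_2+K)+M\Vert\bar{D}_k\Vert$. Since the growth bound is invariant under the similarity $J$, the semigroup $(\tilde{T}(t))_{t\geq 0}$ inherits this estimate, which together with $\Vert\bar{D}_k\Vert\leq\beta_0B$ gives precisely the stated bound on $\omega_0(\tilde{T})$.

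It remains to verify that \eqref{rel_4}--\eqref{rel_6} force this bound below zero. Writing $u=c_2+K$ and $M=1+Kc_1-u$, the inequality $-(c_2+K)+M\beta_0B<0$ is equivalent to $u(1+\beta_0B)>(1+Kc_1)\beta_0B$, i.e.\ exactly to \eqref{rel_5}. The remaining conditions serve only to keep the construction admissible: the bounds $1\leq c_1$, $0\leq c_2$ and $c_2\leq K(c_1-1)$ preserve the hypotheses of Lemma~\ref{stability} and guarantee $M\geq 1$; the estimates $c_1>\sup_{a}(\tilde{\alpha}_1-g)$ and $c_2>\sup_{a}(\tilde{\alpha}_2-h)$ reinstate \eqref{rel_1}--\eqref{rel_2}; and $c_1>\beta_0B/K$ in \eqref{rel_4} is precisely the compatibility condition making the admissible range for $c_2$ nonempty, i.e.\ reconciling \eqref{rel_5} with \eqref{rel_6}. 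I expect the main obstacle to lie in the similarity bookkeeping of the second step: verifying $J^{-1}\bar{D}_kJ=\bar{D}_k$ so that the perturbation \emph{norm}, not merely the exponential rate, is preserved, since otherwise one is left with a growth bound inflated by $\Vert J\Vert\,\Vert J^{-1}\Vert$ and the clean threshold \eqref{rel_5} is lost.
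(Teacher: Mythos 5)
Your proposal is correct and follows essentially the same route as the paper: conjugate $\bar{\mathcal{A}}_0+\bar{D}_k$ by $J$ to obtain $\tilde{\mathcal{A}}+\bar{D}_k$, apply the bounded perturbation theorem with the constants $M_c=1+K(c_1-1)-c_2$ and $c=c_2+K$ from Lemma~\ref{stability}, and check that \eqref{rel_5} is exactly the negativity condition while \eqref{rel_4} guarantees compatibility with \eqref{rel_6}. In fact you make explicit a step the paper only asserts implicitly, namely the verification that $J^{-1}\bar{D}_kJ=\bar{D}_k$ so that the perturbation norm is not inflated by the similarity.
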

	\begin{proof}
	In order to use the invariance of stability under system equivalence, we apply the transformation $J$ to the operator $\bar{\mathcal{A}}_0+\bar{D}_k$, which gives the operator $\tilde{\mathcal{A}}+\bar{D}_k$. By the bounded perturbation theorem from \cite[Chap. 3, Sect. 1]{Engel_Nagel} we know that $\tilde{\mathcal{A}}+\bar{D}_k$ is the infinitesimal generator of a $C_0-$semigroup $\left(\tilde{T}\left(t\right)\right)_{t\geq 0}$ satisfying \begin{align*}
		\Vert \tilde{T}\left(t\right)\Vert\leq& M_c e^{\left(-c+M_c\Vert \bar{D}_k\Vert\right)t}\\
		\Leftrightarrow \Vert \tilde{T}\left(t\right)\Vert\leq& \left(1+K\left(c_1-1\right)-c_2\right)\\&e^{\left(-\left(c_2+K\right)+\left(1+K\left(c_1-1\right)-c_2\right)\Vert\bar{D}_k\Vert\right)t}
	\end{align*} by Lemma \ref{stability} where assumptions \eqref{rel_1} and \eqref{rel_2} are included in \eqref{rel_4} to \eqref{rel_6}.
	Moreover, using $\Vert \bar{D}_k\Vert\leq \beta_0B$ and relations \eqref{rel_5} and \eqref{rel_6} implies that $\left(-c+M_c\Vert \bar{D}_k\Vert\right)<0$. Therefore $\tilde{\mathcal{A}}+\bar{D}_k$ is stable involving the stability of $\bar{\mathcal{A}}_0+\bar{D}_k$.\\ Note that inequation \eqref{rel_4} implies the feasibility of relations \eqref{rel_5} and \eqref{rel_6}.	\end{proof}
	\begin{thm}\label{stab_inf_PDE}\textbf{Stability of the I-individuals}\\
		Let $x_{0_k}=\left[I_{0_k},S_{0_k}\right]^T\in L^1\left(0,L\right)\times L^1\left(0,L\right)$. Assume that we choose $c_1$, $c_2$, $\tilde{\alpha}_1\left(a\right)$ and $\tilde{\alpha}_2\left(a\right)$ such that conditions \eqref{rel_4} to \eqref{rel_6} are satisfied. Then, the state feedback \eqref{Theta} implies the exponential asymptotic convergence to zero of the infected population $I_k\left(t,a\right)$ as time tends to infinity:\\ $\Vert I_k(t,\cdot) \Vert_1 \to 0$ as time goes to infinity. 	
	\end{thm}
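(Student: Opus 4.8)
The plan is to obtain the claim as an almost immediate corollary of the exponential stability established in Lemma~\ref{stab}, after noting that the first component of the transformed state is exactly the infected density. First I would recall that the coordinate change \eqref{chgmt_coord} is defined so that $\bar{I}(t,a)=I(t,a)$; hence, writing the closed-loop state as $\bar{x}_k=\left(\bar{I}_k,\bar{S}_k\right)^T$ with the $L^1\times L^1$ norm $\Vert \bar{x}_k\Vert=\Vert \bar{I}_k\Vert_1+\Vert \bar{S}_k\Vert_1$, one has the pointwise-in-time bound $\Vert I_k(t,\cdot)\Vert_1=\Vert \bar{I}_k(t,\cdot)\Vert_1\leq \Vert \bar{x}_k(t)\Vert$. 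Consequently it suffices to prove that $\Vert \bar{x}_k(t)\Vert\to 0$ exponentially, which reduces the theorem to a statement about the abstract closed-loop trajectory.

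Next I would invoke the state-space reformulation \eqref{statespace}, namely $\dot{\bar{x}}_k=\left(\bar{\mathcal{A}}_0+\bar{D}_k\right)\bar{x}_k$, whose mild solution is $\bar{x}_k(t)=\bar{T}(t)x_{0_k}$ with $\left(\bar{T}(t)\right)_{t\geq 0}$ the $C_0$-semigroup generated by $\bar{\mathcal{A}}_0+\bar{D}_k$. By Lemma~\ref{stab}, provided the tuning functions $\tilde{\alpha}_1(a),\tilde{\alpha}_2(a)$ and the constants $c_1,c_2$ satisfy conditions \eqref{rel_4}--\eqref{rel_6}, this generator is exponentially stable with growth bound $\omega_0(\bar{T})<-(c_2+K)+\left(1+K(c_1-1)-c_2\right)\Vert\bar{D}_k\Vert=:-\omega<0$. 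Combining this with the previous reduction yields $M\geq 1$ and $\omega>0$ such that $\Vert I_k(t,\cdot)\Vert_1\leq\Vert \bar{x}_k(t)\Vert\leq M e^{-\omega t}\Vert x_{0_k}\Vert\to 0$ as $t\to\infty$, which is exactly the asserted exponential convergence.

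The step I expect to require the most care is the passage between the genuine PIDE trajectory and the abstract semigroup orbit $\bar{T}(t)x_{0_k}$, rather than the final estimate itself. This rests on three points: first, that the boundary-control reformulation \eqref{SSF}--\eqref{SSF_BC_2} obtained via Fattorini's approach faithfully encodes the nonhomogeneous boundary condition \eqref{NHBC} as the distributed perturbation $\bar{D}\bar{x}$; second, that replacing the unbounded Dirac operator $\bar{D}$ by the bounded approximant $\bar{D}_k$ is legitimate and does not spoil the bound, using $\Vert\bar{D}_k\Vert\leq\beta(0)B$, which holds uniformly in $k$ since each term of the Dirac sequence integrates to one; and third, that the convergence is transported through the similarity transform $J$ of Lemma~\ref{GI}, which is bounded with bounded inverse, so that the $L^1\times L^1$ norms of $\bar{x}_k$ and $J^{-1}\bar{x}_k$ are equivalent and exponential stability is invariant under the equivalence. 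Once these points are granted, the conclusion follows directly from Lemma~\ref{stab}.
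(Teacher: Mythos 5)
Your proposal is correct and follows essentially the same route as the paper: the paper's own proof simply invokes Lemma~\ref{stab} for the exponential convergence of $\bar{x}_k(t)$ to zero and then uses the identity $\bar{I}_k = I_k$ from the coordinate change \eqref{chgmt_coord}. Your version merely spells out the intermediate bound $\Vert I_k(t,\cdot)\Vert_1 \leq \Vert \bar{x}_k(t)\Vert \leq M e^{-\omega t}\Vert x_{0_k}\Vert$ and the supporting points (the boundary-control reformulation, the uniform bound $\Vert \bar{D}_k\Vert \leq \beta(0)B$, and the invariance under the similarity transform $J$) that the paper leaves implicit.
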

	\begin{proof}
	Since by Lemma \ref{stab}, $\bar{x}_k\left(t\right)$ exponentially converges to zero. Therefore, by relation \eqref{chgmt_coord}, $\bar{I}_k\left(t,a\right)=I_k\left(t,a\right)$ exponentially tends to zero.
		
\end{proof}
	\begin{remark}
	In view of this analysis, we conjecture that $I(t,a)$ asymptotically exponentially converges to zero.\\
	Intuitively, we have that $\bar{I}_k\left(t,a\right)$ and $\bar{S}_k\left(t,a\right)$ tend to $\bar{I}\left(t,a\right)$ and $\bar{S}\left(t,a\right)$ as $k$ tends to infinity. This idea can be shown by studying the limits of the error's dynamics  $E\left(t,a\right)=\left(\bar{I}_k\left(t,a\right)-\bar{I}\left(t,a\right), \bar{S}_k\left(t,a\right)-\bar{S}\left(t,a\right) \right)^T$ which is given, using \eqref{SSF_BC_2} and \eqref{statespace} by \begin{align}
		\dot{E}&=\bar{D}\bar{x}+\bar{D}_k\bar{x}_k\nonumber\\
		E\left(0\right)&=\left(0, 0\right)^T.
	\end{align}  This tends to $\dot{E}=\bar{D}E$ with $E\left(0\right)=\left(0, 0\right)^T$ as $k$ tends to infinity. The solution of this differential equation is $E=0$. Therefore we can hypothesized that $\bar{I}_k\left(t,a\right)$ and $\bar{S}_k\left(t,a\right)$ tend to $\bar{I}\left(t,a\right)$ and $\bar{S}\left(t,a\right)$ as $k$ tends to infinity.\\
	Note that this intuition is corroborated by numerical simulations.	\end{remark}  Finally, We can observe that the state trajectories remain positive for the closed-loop system when positive initial conditions are taken. This can be shown using similar arguments as the ones used for the well-posedness of the HNPIDE model. Specifically for the S-individuals, the methods of the characteristics gives,
		$$ s\left(a,t\right)=\left\{\begin{array}{l}
			\exp\left(-\displaystyle\int_0^a F\left(s(\eta,\eta+t-a),i(\eta,\eta+t-a)\right)\right. \\ \phantom{exp()}+\lambda\left(\eta,\eta+t-a\right)d\eta\Big) \hspace{0.2cm} \text{if }t>a,\\[0.5cm]
			s_0\left(a-t\right)\exp\left(-\displaystyle\int_0^t \lambda\left(\zeta+a-t,\zeta\right)d\zeta \right.\\+F\left(s(\zeta+a-t,\zeta),i(\zeta+a-t,\zeta)\right)\Big) \hspace{0.2cm} \text{if }t\leq a. \end{array}
		\right. $$ where $\lambda(t,a)=\beta(a)\displaystyle\int_0^L P(t,b)i(t,b)db$ and $\theta(t,a)=F\left(s(t,a),i(t,a)\right)$ since it is a state-feedback law.
	This implicit equation remains positive under positive inital conditions.
	\subsection{Design procedure and numerical simulations}
To perform numerical simulations the feedback gains need to be chosen the appropriately in order to ensure the disease eradication but also the positivity of vaccination in order to have physical meaning.\\ 
First, it can be noticed that disease eardication can be achieved regardless of the choice of the parameters. Indeed as it can be viewed in Lemma \ref{stab}, the choice of the design parameters only impact the convergence speed of the system. Therefore they can be tuned to achieved a desired stability margin.\\ In the following, some conditions on the design parameters are highlighted in order to ensure positivity of the vaccination law. 
\begin{thm}\textbf{Sufficient conditions for the positivity of the vaccination law}\\
Define  $$\nu = \underset{a\in[0,L]}{\sup} \mu(a); \Gamma =\underset{a\in[0,L]}{\sup} \gamma(a)$$
and $N$ the total population.
Taking 
\begin{align}
\tilde{\alpha}_2(a)&=3\nu+2\Gamma+\beta(a)N\label{alpha2}\\
\tilde{\alpha}_1(a)&=-\left(\mu(a)+\gamma(a)\right)\left(\mu(a)+\gamma(a)-\tilde{\alpha}_2\right)\label{alpha1}
\end{align}	
	yields the locally exponentially stable closed-loop system \eqref{CLNV}-\eqref{ha}, \eqref{chgmt_coord} with the positive vaccination law \eqref{Theta}.
\end{thm}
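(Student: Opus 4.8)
The plan is to prove the two asserted properties—exponential stability of the closed loop and positivity of the vaccination law \eqref{Theta}—separately, since with the gains \eqref{alpha2}--\eqref{alpha1} they decouple. The stability part reduces to checking that these explicit gains are \emph{admissible} in the already proved Lemma \ref{stab}. Because $\mu$, $\gamma$ and $\beta$ are bounded (so that $\nu$ and $\Gamma$ are finite), $\beta$ is bounded away from zero and $(\gamma+\mu)/\beta\in C^1[0,L]$, both $\tilde{\alpha}_1(a)$ and $\tilde{\alpha}_2(a)$ are bounded functions, and so are $g$ and $h$ from \eqref{ka}--\eqref{ha}; hence the suprema appearing in \eqref{rel_4}--\eqref{rel_5} are finite. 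I would then choose $c_1$ larger than the finite maximum in \eqref{rel_4} and invoke the remark closing Lemma \ref{stab}—that \eqref{rel_4} forces feasibility of \eqref{rel_5}--\eqref{rel_6}—to obtain a compatible $c_2$. Lemma \ref{stab} then yields an exponentially stable $C_0$-semigroup, whence Theorem \ref{stab_inf_PDE} gives $\Vert I_k(t,\cdot)\Vert_1\to0$ exponentially; the word \emph{local} reflects only that the feedback and the coordinate change \eqref{chgmt_coord} are defined where $S\neq0$ and $\int_0^L I\,db\neq0$.

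The more computational part is the positivity of \eqref{Theta}. The design choice \eqref{alpha1} is made precisely so that the bracket $\tilde{\alpha}_1(a)+(\mu(a)+\gamma(a))(\mu(a)+\gamma(a)-\tilde{\alpha}_2(a))$ vanishes identically, so that the only sign-indefinite term of \eqref{Theta}, which also carries the singular factor $1/(\beta S\int_0^L I\,db)$, drops out. Substituting \eqref{alpha2} into what remains and regrouping, I would write
\[
\Theta(t,a)=\big(3\nu-2\mu(a)\big)+\big(2\Gamma-\gamma(a)\big)+\beta(a)\Big(N-\!\int_0^L\! I\,db\Big)+\!\int_0^L\!\beta S\,da-\frac{\int_0^L(\mu+\gamma)I\,da}{\int_0^L I\,db}.
\]
By definition of the suprema, $3\nu-2\mu(a)\ge\nu$ and $2\Gamma-\gamma(a)\ge\Gamma$; the third group is non-negative because $\int_0^L I\,db\le N$, and the fourth because $\beta,S\ge0$. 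The last term is a weighted average of $\mu+\gamma$ over the infected density, hence bounded above by $\sup_a(\mu(a)+\gamma(a))\le\nu+\Gamma$. Balancing the lower bounds $\nu$ and $\Gamma$ against this upper bound $\nu+\Gamma$ gives $\Theta(t,a)\ge0$.

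I expect the main obstacle to be the \emph{reconciliation} of the two requirements rather than either estimate taken alone: positivity forces $\tilde{\alpha}_2$ to be large, which enlarges $\sup_a(\tilde{\alpha}_2-h)$ and hence, through \eqref{rel_5}--\eqref{rel_6}, the required $c_1$ and $c_2$. I would point out that no conflict arises, since $c_1$ may be taken arbitrarily large in \eqref{rel_4}, so a stabilizing pair $(c_1,c_2)$ always accompanies the positivity-enforcing gains. The remaining point needing care is the standing requirement that the trajectory stay physically admissible—non-negative states with $\int_0^L I\,db\le N$—which underlies the two non-negativity facts used in the regrouping and follows from the invariance/positivity arguments already invoked for the closed-loop solutions.
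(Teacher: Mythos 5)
Your proposal is correct and follows essentially the same route as the paper: the choice \eqref{alpha1} cancels the singular bracket in \eqref{Theta}, and then the bounds $\mu\le\nu$, $\gamma\le\Gamma$, $\int_0^L I\,db\le N$ together with \eqref{alpha2} give $\Theta\ge\int_0^L\beta S\,da\ge0$, which is exactly the paper's estimate. The stability half, which you spell out via admissibility of the gains in Lemma \ref{stab}, is handled in the paper only by the remark preceding the theorem that eradication holds for any gain choice, so your added detail is consistent with, not divergent from, the intended argument.
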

\begin{proof}
The vacination law \eqref{Theta} with definition \eqref{alpha1} rewrittes 
\begin{align*}
	\Theta\left(t,a\right)= &  \hspace{0.1cm}\tilde{\alpha}_2\left(a\right)+\displaystyle\int_0^L \beta\left(a\right)S\left(t,a\right)da-2\mu\left(a\right)-\gamma\left(a\right)\nonumber\\& -\beta\left(a\right)\displaystyle\int_0^LI\left(t,b\right)db\nonumber\\&-\dfrac{\displaystyle\int_0^L\left(\mu\left(a\right)+\gamma\left(a\right)\right)I\left(t,a\right)da}{\displaystyle\int_0^LI\left(t,b\right)db}\nonumber.
\end{align*}
Moreover, since $\nu \geq \mu(a)$ for all $a\in [0,L]$, $\Gamma \geq \gamma(a)$ for all $a\in[0,L]$ and $\displaystyle\int_0^L I(t,b)db\leq N$, we get the following estimate for the vaccination law, 
\begin{align*}
	\Theta(t,a) \geq&\hspace{0.1cm}\tilde{\alpha}_2\left(a\right)+\displaystyle\int_0^L \beta\left(a\right)S\left(t,a\right)da-2\nu-\Gamma\nonumber\\&-\beta(a)N-(\nu + \Gamma)\\
	&\geq \displaystyle\int_0^L \beta\left(a\right)S\left(t,a\right)da
\end{align*}
using definition \eqref{alpha2}. It follows that the vaccination law with those choice of parameters is positive.\end{proof}\\ \\
%
Therefore, in the PDE case, there is no need to use a switch vaccination law. Simulations are performed using the same parameters and tolerance defined in Sections \ref{NS} and \ref{NS2}.
	\begin{figure}
		\begin{center}
			\includegraphics[height=5cm]{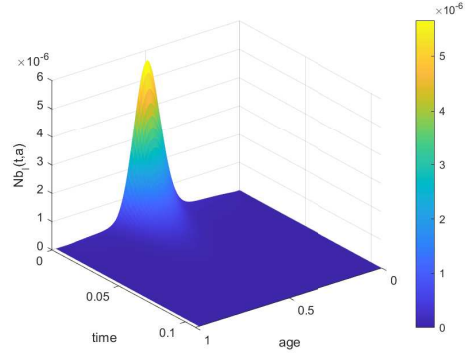}    
			\caption{Dynamics of I-individuals from PIDE Model with vaccination }  
			\label{fig_i_pde}                                 
		\end{center}
	\end{figure}
	\begin{figure} 
		\begin{center}
			\includegraphics[height=5cm]{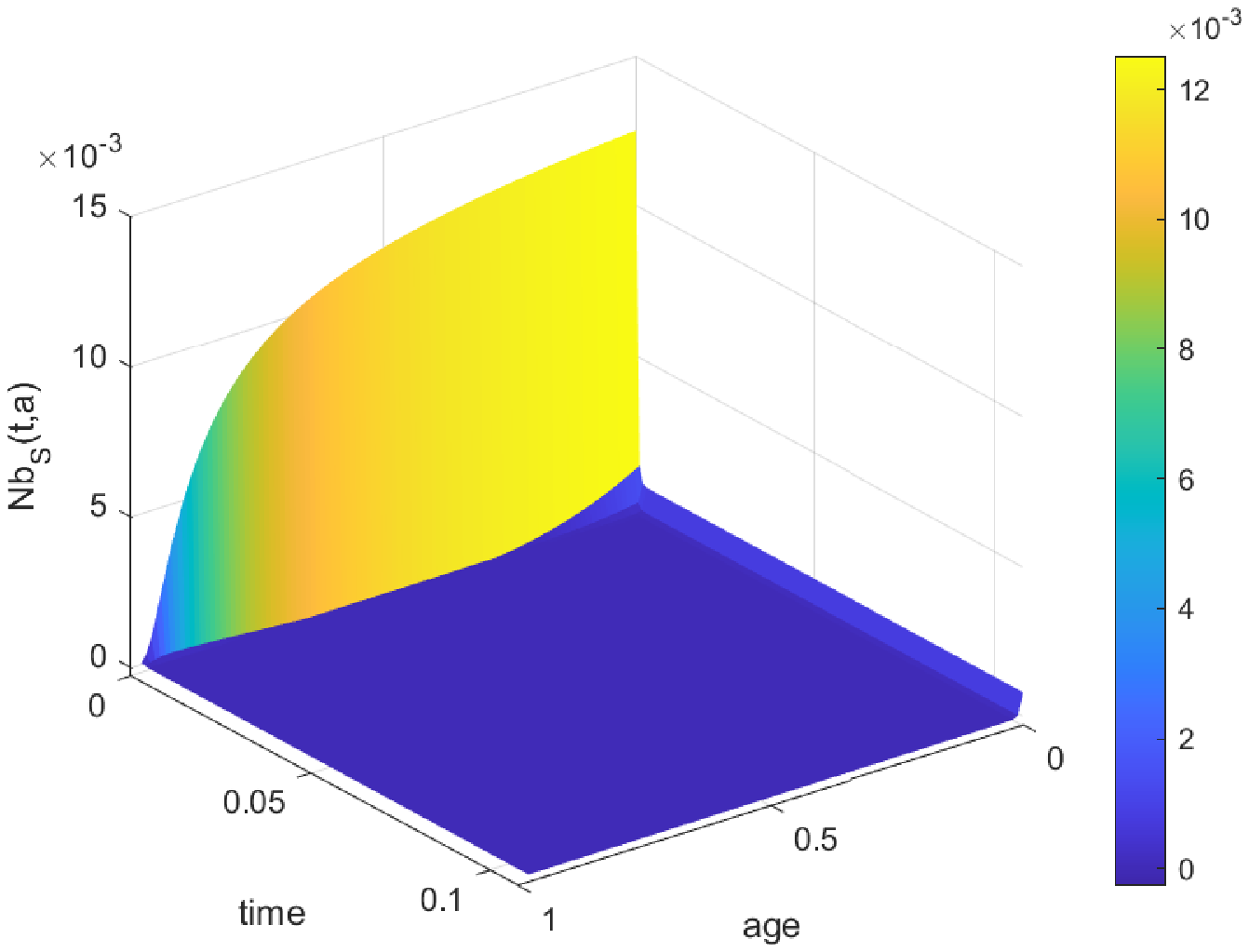}  
			\caption{Dynamics of S-individuals from PIDE Model with vaccination }  
			\label{fig_s_pde}                                 
		\end{center}                               
	\end{figure}
	\begin{figure}
		\begin{center}
			\includegraphics[height=5cm]{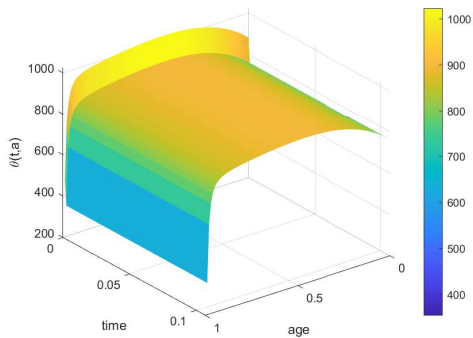}    
			\caption{Dynamics of vaccination for the PIDE Model with vaccination law \eqref{Theta}}  
			\label{fig_vac_pde}                          
		\end{center}                                 
	\end{figure}
	\noindent Figure \ref{fig_i_pde} to \ref{fig_vac_pde} confirms theoretical results. Indeed, Figure \ref{fig_i_pde} shows that the I-individuals tends to zero as time increases. In Figure \ref{fig_s_pde}, the S-individuals trajectory remains positive, so as the vaccination that can be viewed in Figure \ref{fig_vac_pde}. This vaccination law differs from the one obtained for the ODE model. This can be explained by the large choice of design parameters for both models. Since those parameters are not chosen in the same way (randomly for the ODE case and to ensure positivity of the vaccination law in the PDE case) differences occur. The shape of Figure \ref{fig_vac_pde} suggests of vaccinating strongly individuals at the beginning of the epidemy with less focus on young and old inviduals.
	\section{Conclusion and Perspectives}
	The dynamical analysis of an age-dependent SIR model was performed, where we emphasized that the principle of linearized stability is applicable. It was done in Theorem \ref{linearised_stab} by using recent theory. 
	Then, two methods were used to positively stabilize an age-dependent SIR model. The first one is based on the discretization of the PIDE SIR Model according to the age. Then a linearizing nonlinear feedback law was found in Lemma \ref{lemma_fdb} for a system obtained via a change of variables. We proved in Theorem \ref{stab_inf} that this feedback ensures stability of the infected population for some well-chosen gains. Moreover, conditions to get positivity of trajectories were established in Theorem \ref{pos_cdt}. The second method followed from the previous one by using a formal limit. This led to a linearizing nonlinear feedback law for the PIDE Model \ref{PDE_SIR}. Conditions ensuring stability of the closed-loop system were obtained in Theorem \ref{stab_inf_PDE}.
	Finally, numerical simulations have corroborated theoretical results obtained with each methods.  \\ \\
	Some questions remain open. First we can notice that we did not impose a priori any condition on the positivity of the vaccination law, which is essential to have physical meaning. In numerical simulations a saturated law was used and seems to perform well. This could be theoretically validated. Moreover, currently, in the numerical simulations, the feedback gains are chosen randomly in order to satisfy the positivity and stability conditions. Another question could be the choice of those feedback gains in an optimal way. Finally, the control law that was designed is not applicable in practice since it requires the knowledge of all the state variables as it is a state feedback law. This is rarely the case in real situations. 
	A way to counter this is to use a state observer to estimate the whole state. The design of such an observer and the analysis of its performance in connection with the state feedback laws derived in this paper is an important question for further research.\\ \\
	\textbf{Acknowledgements}\\
	The first author wishes to thank the GIPSA-Lab (Grenoble, France) for the fruitful stay made in the framework of this research. This stay was made mostly possible by the support from the FRS-FNRS. The authors also wish to thank Christophe Prieur (GIPSA-Lab) for its insightful advices leading to significant improvements of the paper.
	
	\bibliographystyle{plain}        
	\bibliography{biblio}           

	\appendix
	\section{Notation summary}\label{Appendix1}
	This section aims to summarize in Table \ref{notation_PDE} the parameters and variables used in this article for the PIDE models and for the ODE model respectively, their meaning and their domains of variation corresponding to their physical meaning. 
	
	\begin{table*}[b!]
		\centering
		\begin{tabular}{l|l|l|l}
			\hline
			\multicolumn{4}{l}{\textbf{PIDE Models}} \\ \hline
			Independent Variables	& Interpretation & Range & Unit \\ \hline
			$t$& Time    & $\mathbb{R}^+$ & day \\
			$L$& Maximum age & $\mathbb{R}^+$ & year \\
			$a$& Age  &  $\left[0,L\right]$ & year \\ 
			$\alpha$ & balancing coefficient  & $\mathbb{R}^+$& $\dfrac{\text{year}}{\text{day}}$\\ \hline
			Variables & & & \\ \hline
			& & & \vspace{-0.4cm}\\
			$P\left(t,a\right)$ &  Age density of the total population & $\mathbb{R}^+$ & $\dfrac{\text{Human}}{\text{day}}$\\
			\begin{tabular}[c]{@{}l@{}}$S\left(t,a\right)$\\[-0.15cm]$I\left(t,a\right)$\\[-0.15cm]$R\left(t,a\right)$\end{tabular}& \begin{tabular}[c]{@{}l@{}}Density of S-, I- and R-individuals at time $t$ and age $a$\end{tabular}   & $\mathbb{R}^+$ & $\dfrac{\text{Human}}{\text{day}}$ \\
			\begin{tabular}[c]{@{}l@{}}$s\left(t,a\right)$\\[-0.15cm]$i\left(t,a\right)$\\[-0.15cm]$r\left(t,a\right)$\end{tabular}& \begin{tabular}[c]{@{}l@{}}Normalized density of S-, I- and R-individuals at time $t$ and age $a$\end{tabular}  &  $\left[0,1\right]$ & no unit \\
			$\hat{s}\left(t,a\right)$& \begin{tabular}[c]{@{}l@{}}Normalized density of  S-individuals at time $t$ and age $a$\end{tabular}  &  $\left[-1,0\right]$ & no unit \\
			$\Theta\left(t,a\right)$ & \begin{tabular}[c]{@{}l@{}}Rate of vaccinated S-individuals\end{tabular}&  $\mathbb{R}^+$ & $\dfrac{ 1}{\text{day}}$  \\\hline 
			Parameters & & & \\\hline
			& & & \vspace{-0.4cm}\\
			$B$& birth rate   & $\mathbb{R}_0^+$ & $\dfrac{ \text{Human}}{\text{day}}$ \\
			$\mu\left(a\right)$& Per capita death rate   & $\mathbb{R}^+$ & $\dfrac{ 1}{\text{day}}$ \\
			$\beta\left(a\right)$ &\begin{tabular}[c]{@{}l@{}} Transmission coefficient between all I- and S-individuals of age $a$ \end{tabular} & $\mathbb{R}^+$ & 
			$\dfrac{ 1}{\text{Human}.\text{day}}$	\\	$\gamma\left(a\right)$ & Recovery rate  & $\mathbb{R}^+$ & $\dfrac{ 1}{\text{day}}$\\	
			
			\hline
			\multicolumn{4}{l}{\textbf{ODE Model}} \\ \hline
			
			Independent Variables & Interpretation & Range & Unit \\ \hline
			$t$& Time    & $\mathbb{R}^+$ &day \\ \hline
			Variables & & & \\ \hline
			$N_k$ &  \begin{tabular}[c]{@{}l@{}}Total number of individuals at age in $\left[a_{k-1},a_k\right[$ \end{tabular} & $\mathbb{R}^+$ & Human\\
			\begin{tabular}[c]{@{}l@{}}$s_k\left(t\right)$\\[-0.15cm]$i_k\left(t\right)$\\[-0.15cm]$r_k\left(t\right)$\end{tabular}& \begin{tabular}[c]{@{}l@{}}Proportion of  S-, I- and R-individuals at age in $\left[a_{k-1},a_k\right[$\end{tabular}  &  $\left[0,1\right]$ & no unit\\ 
			$\theta_k\left(t\right)$ & \begin{tabular}[c]{@{}l@{}}Rate of vaccinated S-individuals at age in $\left[a_{k-1},a_k\right[$  \end{tabular}&  $\mathbb{R}^+$ & $\dfrac{1}{day}$ \\\hline
			Parameters & & &\\ \hline
			& & & \vspace{-0.4cm}\\
			$\mu_k$& \begin{tabular}[c]{@{}l@{}}Per capita death rate at age in $\left[a_{k-1},a_k\right[$ \end{tabular}   & $\mathbb{R}^+$ & $\dfrac{1}{\text{day}}$ \\
			$\beta_k$ &\begin{tabular}[c]{@{}l@{}} Transmission coefficient  between all I- and S-individuals at age in $\left[a_{k-1},a_k\right[$  \end{tabular} & $\mathbb{R}^+$&$\dfrac{1}{\text{Hum.day}}$\\
			$\gamma_k$ & \begin{tabular}[c]{@{}l@{}} Recovery rate at age in $\left[a_{k-1},a_k\right[$ \end{tabular}  & $\mathbb{R}^+$ &$\dfrac{1}{\text{day}}$\\
			$\rho_k$ & \begin{tabular}[c]{@{}l@{}} Transfer rate from the $k$th class   \\[-0.15cm] of age to the $(k+1)$th\end{tabular} & $\mathbb{R}^+$ &$\dfrac{1}{\text{day}}$
			
		\end{tabular}
		\caption{Parameters and variables for PIDE and ODE models}
		\label{notation_PDE}		
	\end{table*}     
	\section{Boundary Control Systems on $L^1$ Spaces}\label{BC_L1}   
	First note that none of the arguments developed in \cite{tucsnak_weiss_2009} requires a scalar product which is only defined on Hilbert space, except for the computation of the adjoint operator. However, in $L^1$ spaces, this operator can be compute using duality bracket, which is the approach used here. \\
	Using theory in \cite{tucsnak_weiss_2009}, we need to find the operator $B$ such that the system $\dot{z}(t)=Lz(t)$ with $Gz(t)=z(0)=u(t)$, with $z \in \mathcal{D}\left(L\right)$ is equivalent to the system $\dot{z}(t)=\mathcal{A}z(t)+Bu(t)$. This operator satisfies, for $z\in Z$ and $\psi \in \mathcal{D}\left(\mathcal{A}^{\star}\right)$, \begin{equation}
		\left[Lz,\psi\right]-\left[z,\mathcal{A}^{\star}\psi\right]=\left[Gz,B^{\star}\psi\right].\label{B}
	\end{equation} 
	In order to apply this theory, System \eqref{SSF} can rewrite as $\dot{z}(t)=Lz(t)$ with $Gz(t)=z(0)$ where $z=\left(\bar{I},\bar{S}\right)^T$, $L=\bar{\mathcal{A}}: Z:=W^{1,1}\left(0,L\right)\times W^{1,1}\left(0,L\right) \to X:=L^1\left(0,L\right)\times L^1\left(0,L\right)$ and $G:Z\to U:=\mathbb{R}^2$. Therefore, we define $\mathcal{A}$ as $L$ restricted to $X_1:= \text{Ker } G$. Therefore, $\mathcal{A}=\bar{\mathcal{A}}_0:\mathcal{D}\left(\bar{\mathcal{A}}_0\right)\to X$. It remains to find the operator $B : U \to X_{-1}$ by using relation \eqref{B}.\\ \\
	First we can show that the adjoint of $\bar{\mathcal{A}}_0$ with $\mathcal{D}\left(\bar{\mathcal{A}}_0\right)$  is given by $$\bar{\mathcal{A}}_0^{\star}=\begin{pmatrix}
		\dfrac{d\cdot}{da} & G(a)I\\ I & \dfrac{d\cdot}{da}
	\end{pmatrix}$$ with $\mathcal{D}\left(\bar{\mathcal{A}}_0^{\star}\right)=\left\{y\in L^{\infty}\left(0,L\right)\times L^{\infty}\left(0,L\right) \text{such that}\right.\\\left. \left[\mathcal{A}\cdot, y\right]:L^1\left(0,L\right)\times L^1\left(0,L\right)\to \mathbb{K} \text{ is bounded and linear} \right.\\ \left. \text{ and }y_1\left(L\right)=y_2\left(L\right)=0\right\}. $
	The proof is based on the Riesz representation theorem which implies that $\left[\mathcal{A}z,y\right]=\displaystyle\int_X \mathcal{A}z . y\hspace{0.1cm} d\lambda$ for $z \in \mathcal{D}\left(\mathcal{A}\right)$ and $y\in\mathcal{D}\left(\mathcal{A}^{\star}\right)$ for $\lambda$ $ \sigma-$finite.  
	Then, using relation \eqref{B} and the Riesz representation theorem we find that \begin{align*}
		z\left(0\right)\odot B^{\star}\psi &=z(0)\odot \psi(0).\\
		\intertext{It follows that}
		B^{\star}\psi &=\psi(0)\\
		\left[\psi,B^{\star}\right]&=\left[\delta_0 I,\psi\right]\\&=\left[B,\psi\right]
	\end{align*}
	Therefore, $B$ is given by $\delta_0 I$. \\
	Finally, the assumptions needed in \cite{tucsnak_weiss_2009} are satisfied mostly since $\bar{\mathcal{A}}_0$ is the infinitesimal generator of a $C_0-$semigroup.

\end{document}